\newtheorem{theorem}{Theorem}[]
\newtheorem{proposition}[theorem]{Proposition}
\theoremstyle{definition}
\newtheorem{example}[theorem]{Example}
\newtheorem{remark}[theorem]{Remark}
\newcommand{\Hol}{\mathrm{Hol}}
\newcommand{\Ker}{\operatorname{Ker}}
\newcommand{\GL}{\mathrm{GL}}
\newcommand{\Cent}{\operatorname{Cent}}
\newcommand{\End}{\operatorname{End}}
\newcommand{\Aut}{\operatorname{Aut}}
\newcommand{\Id}{\operatorname{Id}}
\newcommand{\Z}{\mathbf Z}
\newcommand{\F}{\mathbf F}
\newcommand\blfootnote[1]{%
  \begingroup
  \renewcommand\thefootnote{}\footnote{#1}%
  \addtocounter{footnote}{-1}%
  \endgroup
}
\title{Left braces of size $8p$}
\begin{document}
\author[1]{Teresa Crespo}
\author[2]{Daniel Gil-Mu\~noz}
\author[3]{Anna Rio}
\author[3]{Montserrat Vela}

\affil[1]{\footnotesize Departament de Matemàtiques i Informàtica, Universitat de Barcelona, Gran Via de les Corts Catalanes 585, 08007, Barcelona (Spain)}
\affil[2]{\footnotesize Charles University, Faculty of Mathematics and Physics, Department of Algebra, Sokolovska 83, 18600 Praha 8, Czech Republic}
\affil[3]{\footnotesize Departament de Matem\`atiques, Universitat Polit\`ecnica de Catalunya, Edifici Omega, Jordi Girona, 1-3, 08034, Barcelona (Spain)}

\maketitle

\bigskip
\noindent
{\small{\bf ABSTRACT.}
We describe all left braces of size $8p$ for an odd prime $p\ne 3,7$ and validate the number given by Bardakov, Neschadim and Yadav in \cite{BNY}. We give a characterization for isomorphism classes of a semidirect product of left braces and then the description is done by first describing left braces of size $8$, as conjugacy classes of regular subgroups of the corresponding holomorph, and then checking how many non isomorphic left braces of size $8p$ are obtained from each one of them.
}

\medskip
\noindent
{\small{\bf 2020 MSC:} 16T05; 16T25; 20B35; 81R50.}

{\small{\bf Keywords}{:}
Braces, Hopf-Galois extensions, Holomorphs.
}

\baselineskip=\normalbaselineskip

\blfootnote{The first author was supported
by grant PID2019-107297GB-I00 (Ministerio de Ciencia, Innovación y Universidades).
The second author was supported by Czech Science Foundation, grant 21-00420M, and by Charles University Research Centre program UNCE/SCI/022.

Email addresses: teresa.crespo@ub.edu, daniel.gil-munoz@mff.cuni.cz, ana.rio@upc.edu, montse.vela@upc.edu }

\section{Introduction}\label{sec:1}

In \cite{Rump} Rump introduced braces to study
set-theoretic solutions of the Yang-Baxter equation. A left brace is a set $B$ with two operations $+$ and $\cdot$ such
that $(B, +)$ is an abelian group, $(B, \cdot)$ is a group and
$$a(b+c)+a = ab+ac,$$
for all $a, b, c \in B$. We call $N=(B, +$) the additive group and $G=(B, \cdot)$ the multiplicative group of the left brace.

Let $B_1$ and $B_2$ be left braces. A map $f : B_1 \to B_2$ is said to be a brace homomorphism if $f(b+b') = f(b)+f(b')$ and $f(bb') = f(b)f(b')$ for all $b,b' \in B_1$. If $f$ is bijective, we say that $f$ is an isomorphism. In that case we say that the braces $B_1$ and $B_2$ are isomorphic.

This gives the notions of brace isomorphism and isomorphic left braces.

In \cite{Bachiller} 
Bachiller proved that given an abelian group $N$, there is
a bijective correspondence between left braces with additive group $N$, and regular subgroups of $\Hol(N)$ such that isomorphic left braces correspond to conjugate subgroups of $\Hol(N)$ by elements of $\Aut(N)$. In this way he established the connection between braces and Hopf-Galois separable extensions.

In \cite{BNY}, Lemma 2.1,  it is proved that $\Aut(N)$, as a subgroup of $\Hol(N)$, is action-closed with respect to the conjugation action of $\Hol(N)$ on
the set of regular subgroups of $\Hol(N)$. Therefore, given an abelian group
$N$, the non-isomorphic left braces with additive group $N$ are in bijective correspondence with conjugacy classes of regular subgroups in $\Hol(N)$.
In \cite[Conjecture 4.2]{BNY}, 
Bardakov, Neschadim and Yadav conjectured the number $b(8p)$ of left braces of size $8p$ for $p\ge 11$ a prime number:
$$b(8p)= \left\{ \begin{array}{ll} 90 & \text{\ if } p\equiv 3, 7 \pmod{8}, \\
	106 & \text{\ if } p\equiv 5 \pmod{8}, \\
	108 & \text{\ if } p\equiv 1 \pmod{8}.
	\end{array} \right.
	$$
Our aim is to describe all the isomorphism classes of braces of size $8p$ in order to check the validity of this conjecture.

\section{Braces of size $8p$}

The theory of braces mimics many of the constructions and definitions of group theory (see \cite{Cedo}). If $p=5$ or $p\ge 11$, the Sylow $p$-subgroup of a group of order $8p$ is a normal subgroup and therefore the group is a direct or semidirect product of the (unique) group of order $p$ and a group of order $8$.
Our aim is to prove that we have the same situation for braces. In order to do that, let us define direct and semidirect product of braces as in \cite{Cedo} or \cite{SV}.

Let $B_1$ and $B_2$ be left braces. Then $B_1 \times B_2$ together with
$$
(a,b)+(a',b')=(a+a',b+b')\quad (a,b)\cdot(a',b')=(aa',bb')
$$
is a left brace called the direct product of braces $B_1$ and $B_2$.

Now, let $\tau:(B_2,\cdot)\to\Aut(B_1,+,\cdot)$
be a homomorphism of groups.
Consider in $B_1\times B_2$
the additive structure of the direct product $(B_1,+)\times (B_2,+)$
$$
(a,b)+(a',b')=(a+a',b+b')
$$
and the multiplicative structure of the semidirect product
$(B_1,\cdot)\rtimes_{\tau} (B_2,\cdot)$
$$
(a,b)\cdot(a',b')=(a\tau_b(a'),b b')
$$
Then, we get a left brace,  which is called the semidirect product of the left braces $B_1$ and $B_2$ via $\tau$.

From \cite{SV} we know that if $N$ is the additive group of a brace and $N=N_1\times \dots \times N_k$ is its Sylow decomposition, then every $N_i$ is also the additive group of a brace.

If $p$ is an odd prime and $N$ is an abelian group of size $8p$, then $N$ has Sylow 
decomposition $N=\Z_p\times E$, where $E$ is an abelian group of order $8$. For the simple group $\Z_p$ we have just the trivial brace, namely the multiplicative group is also $\Z_p$ (we can use also the notation $C_p$). For the abelian group of order $8$ we can have several multiplicative groups giving a left brace structure.

\begin{proposition}\label{braceprod} Let $p=5$ or $p\ge 11$ be a prime.
Every left brace of size $8p$ is a direct or semidirect product of the trivial brace of size $p$ and a left brace  of size $8$.
\end{proposition}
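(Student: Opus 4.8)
The plan is to transfer the group-theoretic structure result to braces via the multiplicative and additive groups simultaneously. Let $B$ be a left brace of size $8p$ with additive group $N$ and multiplicative group $G$. Since $N$ is abelian of order $8p$ with $p=5$ or $p\ge 11$, its Sylow decomposition is $N=\Z_p\times E$ with $E$ of order $8$; write $N_p$ for the Sylow $p$-subgroup of $N$. By the result quoted from \cite{SV}, $N_p$ is also the additive group of a sub-brace of $B$, which must be the trivial brace of size $p$ since $\Z_p$ admits only the trivial brace.

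The first key step is to locate a normal sub-brace of order $p$. I would use the standard fact (see \cite{Cedo}) that in a left brace the additive Sylow subgroups coincide with the multiplicative Sylow subgroups as subsets, so $N_p$ is also a Sylow $p$-subgroup of $G$; since $|G|=8p$ with $p=5$ or $p\ge 11$, Sylow's theorem forces the Sylow $p$-subgroup of $G$ to be normal in $G$, hence $N_p$ is an ideal of $B$ (normal in both operations, the additive normality being automatic). Thus $B$ has an ideal $I=N_p$ which is the trivial brace of size $p$, and the quotient $B/I$ is a left brace of size $8$.

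The second key step is to produce a complementary sub-brace of size $8$ and exhibit $B$ as a (semi)direct product. Because $\gcd(8,p)=1$, the additive group splits as $N=N_p\times E$, so there is a unique subgroup $E\le(B,+)$ of order $8$; I claim $E$ is also a subgroup of $(B,\cdot)$, because an element of additive order dividing $8$ has multiplicative order dividing $8$ as well (again the Sylow subsets coincide), so $E$ equals the set of $8$-elements of $G$, which is a subgroup of $G$ since the Sylow $p$-subgroup is normal and $8p/p=8$. Hence $E$ is a sub-brace of $B$ of size $8$ with $B=I+E=IE$ and $I\cap E=0$. Then $B$ is the semidirect product of the braces $E$ and $I\cong\Z_p$ via the conjugation homomorphism $\tau:(E,\cdot)\to\Aut(I,+,\cdot)$ given by $\tau_b(a)=b\cdot a\cdot b^{-1}$ for $a\in I$, $b\in E$ — one checks this is well-defined into brace automorphisms using that $I$ is an ideal — and it degenerates to the direct product exactly when $\tau$ is trivial; swapping the roles so that $B_1=E$, $B_2=I$ matches the statement after noting the trivial brace of size $p$ is the one of size $p$ here.

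The main obstacle I anticipate is verifying carefully that $\tau$ lands in $\Aut(B_1,+,\cdot)$, i.e. that conjugation by an element of $E$ respects \emph{both} the addition and the multiplication of $I$, and that the brace axiom on $B$ is exactly reproduced by the semidirect product formula $(a,b)\cdot(a',b')=(a\,\tau_b(a'),bb')$ together with componentwise addition. This is where one must use the defining identity $a(b+c)+a=ab+ac$ together with normality of $I$; it is essentially a bookkeeping computation, and the only genuinely delicate point is checking that the additive and multiplicative complements can be taken to be the \emph{same} subset $E$, which rests on the coincidence of additive and multiplicative Sylow subgroups in a brace of coprime-factored order.
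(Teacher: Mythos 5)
Your overall route is, in substance, the paper's own: split the additive group as $\Z_p\times E$, use $p=5$ or $p\ge 11$ to get a normal multiplicative Sylow $p$-subgroup, and present $B$ as a semidirect product of the trivial $\Z_p$-brace and a brace of size $8$ (the paper just runs this as a componentwise verification in coordinates rather than as an internal ideal--complement argument). However, your justification of the complement step is genuinely wrong as stated. You claim that $E$ ``equals the set of $8$-elements of $G$'' and that this set ``is a subgroup of $G$ since the Sylow $p$-subgroup is normal''. Both claims fail whenever $\tau$ is nontrivial: then $G\cong\Z_p\rtimes_\tau F$ has $p$ distinct Sylow $2$-subgroups, so there are more than $8$ elements of $2$-power multiplicative order and they do not form a subgroup (the $2$-elements form a subgroup precisely when the Sylow $2$-subgroup is normal); normality of the Sylow $p$-subgroup is irrelevant to this. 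Likewise the blanket statement that additive and multiplicative Sylow subgroups ``coincide as subsets'' is too strong: only one inclusion is available. (Compare the brace with additive group $\Z_{2p}$ and multiplicative group dihedral of order $2p$: it has $p$ multiplicative involutions but a single additive element of order $2$.) The correct fact, which is also all you need, is the one you half-invoke for the prime $p$: each Sylow subgroup of $(B,+)$ is characteristic in $(B,+)$, hence stable under every $\lambda_a$, hence a left ideal, hence closed under $\cdot$ and inversion; so it is \emph{a} Sylow subgroup of $G$ (for $q=p$ the unique, normal one). Applying this with $q=2$ shows directly that your $E$ is a sub-brace, with no appeal to ``the set of all $2$-elements''.

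The second point is that the verification you defer as ``bookkeeping'' is where the real content of the proposition sits (it is exactly what the paper's componentwise computation encodes), and it needs one further input: to see that $(a,b)\mapsto a+b$ is multiplicative (equivalently, that $(a,b)\mapsto ab$ is additive) for the conjugation action $\tau$, you must know that $a\cdot y=a+y$ for all $a\in I$ and $y\in B$, i.e. that $\lambda_a=\mathrm{id}$ for $a\in I$; without this the additive and multiplicative decompositions need not be expressed by the same coordinates. This follows from an order count: $\lambda$ restricts to a homomorphism from $(I,\cdot)\cong\Z_p$ into $\Aut(B,+)$, whose order $(p-1)\,|\Aut(E)|$ is prime to $p$ because $|\Aut(E)|$ divides $168$ and $p=5$ or $p\ge 11$; hence $\lambda|_I$ is trivial, and then $(a+b)(a'+b')=a+ba'b^{-1}+bb'$ drops out of the brace axiom, completing your identification of $B$ with $I\rtimes_\tau E$. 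Note that this is a second place where the hypothesis on $p$ is used (for $p=3,7$ and $E=\Z_2\times\Z_2\times\Z_2$ the order argument, and the proposition itself, break down), so it should be made explicit rather than absorbed into routine checking.
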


\begin{proof}
Let $B$ be a left brace of size $8p$ with additive group $N$ and multiplicative group $G$. Then, $N=\Z_p\times E$ with $E$ abelian of order $8$ and $G=\Z_p\rtimes_{\tau} F$ with $F$ a group of order $8$ and $\tau:F\to\Aut(\Z_p)$ a group homomorphism (the trivial one giving the direct product). Let us observe that, since we are working with the trivial brace, the group of brace automorphisms is the classical group $\Aut(\Z_p)\simeq Z_p^*$. 

Then,
$$\begin{matrix}
(a_1,a_2)((b_1,b_2)+(c_1,c_2))+(a_1,a_2)=
(a_1,a_2)(b_1+c_1,b_2+c_2)+(a_1,a_2)=\\
=(a_1+\tau_{a_2}(b_1+c_1)+a_1, a_2(b_2+c_2)+a_2).
\end{matrix}
$$
On the other hand,
$$\begin{matrix}
(a_1,a_2)(b_1,b_2)+(a_1,a_2)(c_1,c_2)
=(a_1+\tau_{a_2}(b_1)+a_1+\tau_{a_2}(c_1), a_2b_2+a_2c_2).
\end{matrix}
$$
Therefore, from the brace condition of $B$ we obtain an equality in the second component which tells us that we have a brace $B'$ of size $8$ with  additive group $E$ and multiplicative group $F$. Then, $B$ is the semidirect product via $\tau$ of the trivial brace with group $Z_p$ and this brace $B'$.
\end{proof}

In terms of Hopf-Galois structures this corresponds to
abelian types of induced structures as introduced in \cite{CRV}.

In the sequel, for $B$ a left brace of size $8p$ we shall denote by $N$ its additive group and by $G$ its multiplicative group. Then, $N=\Z_p\times E$, with $E$ an abelian group of order $8$, and $G=\Z_p\rtimes_{\tau} F$, with $F$ a group of order $8$ and $\tau:F\to\Aut(Z_p)$ a group homomorphism.

In order to classify the left braces of size $8p$ we can begin with the isomorphism classes of braces of size $8$ with additive group $E$ and then construct the semidirect products with $\Z_p$. Clearly, if we have isomorphic braces of size $8p$ we will have isomorphic braces of size $8$, but the converse is not true, since a brace of size $8$ can have different group morphisms $\tau:F\to\Aut(\Z_p)$ giving semidirect products which are non isomorphic braces.

Note that for $N=\mathbb{Z}_p\times E$, we have $\mathrm{Hol}(N)=\mathrm{Hol}(\mathbb{Z}_p)\times\mathrm{Hol}(E)$, and $G=\mathbb{Z}_p\rtimes_{\tau}F$ must be a subgroup of $\mathrm{Hol}(N)$, and in particular, $F$ is embedded in $\mathrm{Hol}(E)$. Now, in $\Hol(N)=\Hol(\Z_p)\times \Hol(E)$ we denote the elements $(m,k,a,\sigma)$ with $m,k$ integers mod $p$, $k\ne 0$, and
$(a,\sigma)\in E\rtimes\mathrm{Aut}(E)$. The element $(1,1,0,1)$ generates $\Z_p$ and, for $(a,\sigma)\in F$,
$$
(0,\tau(a,\sigma), a,\sigma)(1,1,0,1)(0,\tau(a,\sigma), a,\sigma)^{-1}=
$$
$$=
(\tau(a,\sigma),\tau(a,\sigma), a,\sigma)(0,\tau(a,\sigma)^{-1}, -\sigma^{-1}(a),\sigma^{-1})
=(\tau(a,\sigma),1,0,1)
=(1,1,0,1)^{\tau(a,\sigma)}.$$

Then, once fixed a homomorphism $\tau\colon F\longrightarrow\mathrm{Aut}(\Z_p)$, $$G=\{(m,\tau(a,\sigma),a,\sigma)\,|\,m\in\mathbb{Z}_p,\,(a,\sigma)\in F\}$$ is an order $8p$ group isomorphic to $\Z_p\rtimes_{\tau}F$. Since the action on $N$ is given by
$$(m,k,a,\sigma)(z,x)=(m+kz, a+\sigma(x))$$ we obtain a transitive action from transitivity in each component.

\begin{example}\label{ex}
Let $p$ be an odd prime and let $E=\Z_8$. Then, $\Hol(E)$ has a unique conjugacy class 
of regular subgroups isomorphic to $\Z_4\times \Z_2$.
Let $F=\langle f_4\rangle\times \langle f_2\rangle$ be one of them.
Then, we have two different group homomorphisms $\tau_1,\tau_2: F\to \Aut(\Z_p)=\Z_p^*$, with cyclic kernel of order $4$. These kernels are $\langle f_4\rangle$ and
$\langle f_4f_2\rangle$.

If we write $\Hol(E)=\Z_8\rtimes \Z_8^*$,
then we can take $f_4=(2,5)$ and $f_2=(1,7)$, since they have orders 4 and 2, respectively, they commute and $F=\langle f_4\rangle\times \langle f_2\rangle$ acts transitively on $\Z_8$ via $(a,l)x=a+lx$.

We have
$$
f_4f_2=(2,5)(1,7)=(2+5\cdot 1 \bmod 8,5\cdot 7\bmod 8)=(7,3).
$$
Since $\Z_8^*$ is abelian, conjugate elements share the same second component and we see that the cyclic subgroups $\langle f_4\rangle$ and $\langle f_4f_2\rangle$ are not conjugate in $\Hol(E)$.

For each $i\in\{1,2\}$
$$
G_i=\{(m,\tau_i(a,l), a,l)\mid\  m\in\Z_p,\  (a,l)\in F\}
$$
is a subgroup of $\Hol(N)$ isomorphic
to the semidirect product $\Z_p\rtimes_{\tau_i}F$. Since $G_1, G_2$ are regular subgroups of $\Hol(N)$, they correspond to two braces with addditive group $N$ and multiplicative group $G_1$ and $G_2$, respectively. To see that they are not isomorphic braces we have to check that $G_1$ and $G_2$ are not conjugate in $\Hol(N)$. We have
$$
\begin{matrix}G_1=\{
&(m, 1, 0, 1),&
(m, 1, 2, 5),& (m, 1, 4, 1),& (m, 1, 6, 5),\\
 &(m, -1, 1, 7),& (m, -1, 7, 3),& (m, -1, 5, 7),& (m, -1, 3, 3)
 \quad\}
\end{matrix}
$$
and
$$
\begin{matrix}G_2=\{
&(m, 1, 0, 1),&
(m, -1, 2, 5),& (m, 1, 4, 1),& (m, -1, 6, 5),\\
 &(m, -1, 1, 7),& (m, 1, 7, 3),& (m, -1, 5, 7),& (m, 1, 3, 3)\quad \}.
\end{matrix}
$$
Again, since $\Aut(\Z_p)$ and $\Aut(E)$ are abelian groups, conjugate elements in $\Hol(N)$ have the same values of the second and fourth components. Then, we see that $G_1$ and $G_2$ are not conjugate.

\end{example}

\section{Braces of order $8p$: direct products}

\begin{proposition}\label{direct} For an odd prime $p$, there are 27 left braces of size $8p$ which are direct product of the unique brace of size $p$ and a brace of size $8$.
\end{proposition}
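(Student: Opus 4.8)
The plan is to count, up to isomorphism, the left braces of size $8p$ whose multiplicative group has the form $G=\Z_p\times F$, i.e.\ those arising as a direct product with the trivial brace $\Z_p$. By Proposition~\ref{braceprod} every such brace is determined by a brace $B'$ of size $8$ (with additive group $E$ an abelian group of order $8$ and multiplicative group $F$) together with the trivial homomorphism $\tau$; so the claim $27$ should be exactly the number of isomorphism classes of left braces of size $8$. Thus the first step is to tabulate all braces of size $8$, which is the classification announced in the introduction (as conjugacy classes of regular subgroups of $\Hol(E)$ for each of the three abelian groups $E\in\{\Z_8,\ \Z_4\times\Z_2,\ \Z_2^3\}$). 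I would cite or reproduce that table; the well-known count is $27$ (this matches the literature on braces of order $8$), broken up as: $3$ with $E=\Z_8$, some number with $E=\Z_4\times\Z_2$, and the rest with $E=\Z_2^3$.

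Second, I need to argue that passing from a brace $B'$ of size $8$ to the direct product $\Z_p\times B'$ induces a \emph{bijection} between isomorphism classes of size-$8$ braces and isomorphism classes of size-$8p$ braces of direct-product type. Injectivity is the easy direction already noted in the text: an isomorphism of size-$8p$ braces restricts (via the Sylow decomposition, using the result from \cite{SV} that each Sylow component of a brace is a subbrace and is preserved) to an isomorphism of the size-$8$ components, so non-isomorphic $B'$ give non-isomorphic products. Surjectivity onto the direct-product classes is by definition of what we are counting. The one genuine subtlety is making sure that a direct product $\Z_p\times B'$ is never isomorphic to a \emph{semidirect} (non-direct) product $\Z_p\rtimes_\tau F$: this is seen at the level of holomorphs, since in $\Hol(N)=\Hol(\Z_p)\times\Hol(E)$ the second coordinate (the $\Hol(\Z_p)$-component) of the regular subgroup $G$ is the whole of $\Z_p$ in the direct case but is all of $\langle k\rangle$ with nontrivial scalar part in the semidirect case, and conjugation by $\Aut(N)=\Aut(\Z_p)\times\Aut(E)$ cannot change the set of "rotation" scalars $k$ appearing — exactly the argument illustrated in Example~\ref{ex}. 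So the direct-product classes form a clean, separate block.

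Concretely I would organize the proof as: (i) invoke the classification of the $27$ braces of size $8$, listing for each of the three additive groups $E$ the regular subgroups of $\Hol(E)$ up to $\Aut(E)$-conjugacy and hence the multiplicative groups $F$ that occur; (ii) for each such $B'$, form $\Z_p\times B'$ and check it is a left brace of size $8p$ (immediate from the direct-product construction recalled before Proposition~\ref{braceprod}); (iii) prove these $27$ are pairwise non-isomorphic using the Sylow-component argument; and (iv) conclude there are exactly $27$.

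The main obstacle is step (i): one must be confident in the full list of braces of size $8$, since the number $27$ rests entirely on it. Depending on how the paper is structured, this is either quoted from an earlier section of this paper (where braces of size $8$ are described "as conjugacy classes of regular subgroups of the corresponding holomorph", per the abstract) or from the literature; either way the bulk of the work for Proposition~\ref{direct} is bookkeeping over that classification rather than any new idea, and the only point needing care is the separation of direct from semidirect products via the holomorph coordinates, which I would spell out exactly once in the style of Example~\ref{ex}.
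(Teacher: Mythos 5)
Your proposal is correct and follows essentially the same route as the paper: quote the classification of the $27$ left braces of size $8$ (the paper cites \cite{V}) and take direct products with the trivial brace $\Z_p$, with the non-isomorphism of distinct products and the separation from proper semidirect products handled exactly as you indicate (the paper leaves these points implicit, deferring them to the holomorph argument of Example~\ref{ex} and Proposition~\ref{determsemid}). One small correction: the breakdown by additive type is $5$ for $E=\Z_8$, $14$ for $E=\Z_4\times\Z_2$ and $8$ for $E=\Z_2\times\Z_2\times\Z_2$, not $3$ for $E=\Z_8$, though this does not affect the total of $27$.
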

\begin{proof}
In \cite{V} it is shown that there are 27 left braces of size 8. Then, the direct product of each of these with the trivial brace of size $p$ gives a left brace of size $8p$. 
\end{proof}

If we want to specify the multiplicative group of each brace above, we can use Magma to compute the conjugacy classes of regular groups of $\Hol(E)$ for the three different abelian groups of order $8$ and classify them according to the isomorphism class.
\begin{enumerate}
	\item 	$\Hol(\Z_8)\simeq \Z_8\rtimes V_4$ has 5 conjugacy classes of regular subgroups with the following distribution of isomorphism types
	\begin{center}
		\begin{tabular}{|l||c|} \hline
		Type  & Number \\
		\hline
		$\Z_8$&$2$\\
		\hline
		$\Z_4\times \Z_2$&$1$\\
		\hline
		$\Z_2\times \Z_2\times \Z_2$&$0$\\
		\hline
		$D_{2\cdot4}$&$1$\\
		\hline
		$Q_{8}$&$1$\\ \hline
		\end{tabular}
	\end{center}
	 This gives the number of braces with additive type $\Z_p\times \Z_8$ and multiplicative type a direct product
	 $\Z_p\times F$, with $F$ as in the above table.
	
	\item 	$\Hol(\Z_4\times \Z_2)\simeq (\Z_4\times \Z_2)\rtimes D_{2\cdot4}$ has 14 conjugacy classes of regular subgroups with the following distribution of isomorphism types
	\begin{center}
		\begin{tabular}{|l||c|} \hline
		Type  & Number \\
		\hline
		$\Z_8$&$0$\\
		\hline
		$\Z_4\times \Z_2$&$6$\\
		\hline
		$\Z_2\times \Z_2\times \Z_2$&$2$\\
		\hline
		$D_{2\cdot4}$&$5$\\
		\hline
		$Q_{8}$&$1$\\ \hline
		\end{tabular}
	\end{center}
	 This gives the number of braces with additive type $Z_p\times Z_4\times Z_2$ and multiplicative type a direct product $\Z_p\times F$, with $F$ as in the above table.

	\item 	$\Hol(\Z_2\times \Z_2\times \Z_2)\simeq \F_2^3\rtimes \GL(3,2) $ has $8$ conjugacy classes of regular subgroups with the following distribution of isomorphism types
	\begin{center}
		\begin{tabular}{|l||c|} \hline
		Type  & Number \\
		\hline
		$\Z_8$&$0$\\
		\hline
		$\Z_4\times \Z_2$&$3$\\
		\hline
		$\Z_2\times \Z_2\times \Z_2$&$2$\\
		\hline
		$D_{2\cdot4}$&$2$\\
		\hline
		$Q_{8}$&$1$\\ \hline
		\end{tabular}
	\end{center}
    This gives the number of braces with additive type $Z_p\times \Z_2\times \Z_2\times \Z_2$ and multiplicative type a direct product $\Z_p\times F$, with $F$ as in the above table.
	\end{enumerate}

\section{Braces of size $8p$: semidirect products}

\begin{proposition}\label{determsemid}
 Let $p=5$ or $p\ge 11$ be a prime and $N=\Z_p\times E$ an abelian group of order $8p$. 

 The conjugacy classes of regular subgroups of $\Hol(N)$ are in one to one correspondence with couples $(F,\tau)$ where $F$ runs over a set of representatives of conjugacy classes of regular subgroups of $\Hol(E)$ and $\tau$ runs over representatives of conjugacy classes by $\Aut(E)$ of group morphisms $\tau:F\to \Aut(\Z_p)$, that is
 $
 \tau\simeq \tau'$ if and only if $\tau=\tau'\circ \Phi_{\nu}|_F$
 where $\nu\in \Aut(E)$ and $\Phi_\nu$ is the corresponding inner automorphism of $\Hol(E)$.
\end{proposition}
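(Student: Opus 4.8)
\emph{Parametrising regular subgroups by pairs.} By Proposition~\ref{braceprod} and the explicit description preceding the statement, every regular subgroup $G$ of $\Hol(N)=\Hol(\Z_p)\times\Hol(E)$ equals
\[
G_{F,\tau}:=\{(m,\tau(a,\sigma),a,\sigma)\mid m\in\Z_p,\ (a,\sigma)\in F\},
\]
where $F$ is the image of $G$ under the projection onto $\Hol(E)$ --- a regular subgroup of $\Hol(E)$, because $|F|=8$ and $F$ acts transitively on $E$ --- and $\tau\colon F\to\Aut(\Z_p)$ is the map assigning to $(a,\sigma)$ the second coordinate of any element of $G$ whose $\Hol(E)$-part is $(a,\sigma)$. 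This $\tau$ is well defined because $G\cap\Hol(\Z_p)=\langle(1,1,0,1)\rangle$, and it is a homomorphism because $G$ is a group and $\Aut(\Z_p)$ is abelian. Conversely, $G_{F,\tau}$ is a regular subgroup of $\Hol(N)$ for every regular $F\le\Hol(E)$ and every homomorphism $\tau\colon F\to\Aut(\Z_p)$. I would check that $G\mapsto(F,\tau)$ and $(F,\tau)\mapsto G_{F,\tau}$ are mutually inverse, so that regular subgroups of $\Hol(N)$ are in bijection with such pairs $(F,\tau)$.

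\emph{Translating conjugation.} Since $\Hol(N)$ is a direct product, conjugation acts coordinatewise. Conjugating $G_{F,\tau}$ by an element of the factor $\Hol(\Z_p)$ leaves it unchanged: the $\Hol(E)$-coordinates are untouched, the second $\Z_p$-coordinate $\tau(a,\sigma)$ is fixed because $\Aut(\Z_p)=\Z_p^{*}$ is abelian, and the first $\Z_p$-coordinate ranges over all of $\Z_p$ regardless. Conjugating $G_{F,\tau}$ by $g\in\Hol(E)$ turns it into $G_{gFg^{-1},\,\tau\circ c_g^{-1}}$, where $c_g$ denotes conjugation by $g$ in $\Hol(E)$. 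Hence conjugacy classes of regular subgroups of $\Hol(N)$ are the orbits of $\Hol(E)$ acting on pairs by $g\cdot(F,\tau)=(gFg^{-1},\tau\circ c_g^{-1})$. Decomposing this action over the first coordinate --- whose orbits are precisely the conjugacy classes of regular subgroups of $\Hol(E)$ --- the choice of a representative $F$ reduces the problem to computing the orbits of $\Norm_{\Hol(E)}(F)$ on $\mathrm{Hom}(F,\Aut(\Z_p))$ under $g\colon\tau\mapsto\tau\circ c_g^{-1}|_F$.

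\emph{Reducing the normaliser to $\Aut(E)$ --- the crux.} It remains to see that $\Norm_{\Hol(E)}(F)$ may be replaced by $\Norm_{\Aut(E)}(F)$ here, i.e.\ that the inner automorphisms $\Phi_\nu$ of $\Hol(E)$ with $\nu\in\Aut(E)$ already exhaust what is needed. I would argue with two observations. First, since $F$ is regular on $E$ we have the factorisation $\Hol(E)=F\cdot\Aut(E)$ with $F\cap\Aut(E)=\{1\}$ ($\Aut(E)$ being the stabiliser of $0\in E$ and $|F|=|E|$); writing $g\in\Norm_{\Hol(E)}(F)$ as $g=f\nu$ with $f\in F$ and $\nu\in\Aut(E)$, the relation $gFg^{-1}=F$ forces $\nu F\nu^{-1}=F$, so $\Norm_{\Hol(E)}(F)=F\cdot\Norm_{\Aut(E)}(F)$. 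Second, as $\Aut(\Z_p)$ is abelian, the inner automorphisms of $F$ act trivially on $\mathrm{Hom}(F,\Aut(\Z_p))$, namely $\tau\circ c_f=\tau$ for all $f\in F$. Combining these, $\tau\circ c_g^{-1}|_F=\tau\circ c_\nu^{-1}|_F$, so $\Norm_{\Hol(E)}(F)$ and $\Norm_{\Aut(E)}(F)$ have the same orbits on $\mathrm{Hom}(F,\Aut(\Z_p))$, and $\tau\simeq\tau'$ holds exactly when $\tau=\tau'\circ\Phi_\nu|_F$ for some $\nu\in\Aut(E)$ normalising $F$, which is the asserted equivalence. I expect this last step to be the main obstacle --- showing that the translation parts of $\Hol(E)$ contribute nothing new to the equivalence on the morphisms $\tau$ --- resting on the Frobenius-type factorisation $\Hol(E)=F\cdot\Aut(E)$ together with the triviality of the $\mathrm{Inn}(F)$-action on homomorphisms into the abelian group $\Aut(\Z_p)$.
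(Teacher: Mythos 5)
Your proof is correct, but it reaches the key reduction by a different route than the paper. Both arguments use the same parametrisation $G_{F,\tau}=\{(m,\tau(a,\sigma),a,\sigma)\}$ and the same computation that conjugation transforms the pair $(F,\tau)$ into $(\Phi_{\nu}(F),\tau\circ\Phi_{\nu}^{-1})$; the difference lies in how one sees that nothing beyond $\Aut$-conjugation matters. The paper invokes the lemma of \cite{BNY} quoted in the introduction ($\Aut(N)$ is action-closed on regular subgroups of $\Hol(N)$), so it only has to conjugate by $(i,\nu)\in\Aut(N)=\Z_p^{*}\times\Aut(E)$ and read off the condition $F'=\Phi_{\nu}(F)$, $\tau=\tau'\circ\Phi_{\nu}|_F$. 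You instead analyse the full conjugation action of $\Hol(N)$: you check that the factor $\Hol(\Z_p)$ fixes each $G_{F,\tau}$, and you dispose of the translation part of $\Hol(E)$ through the exact factorisation $\Hol(E)=F\cdot\Aut(E)$ (valid because $F$ is regular), which gives $\Norm_{\Hol(E)}(F)=F\cdot\Norm_{\Aut(E)}(F)$, combined with the fact that inner automorphisms of $F$ act trivially on $\operatorname{Hom}(F,\Aut(\Z_p))$ because $\Aut(\Z_p)$ is abelian. In effect you re-prove, in this special situation, what the paper imports from \cite{BNY}; your version is self-contained and makes transparent why only $\nu\in\Aut(E)$ normalising $F$ can matter, while the paper's is shorter and states the equivalence directly at the level of $\Aut(N)$-conjugacy.

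One small point to tighten: the claim that every regular subgroup $G\le\Hol(N)$ is \emph{literally} equal to some $G_{F,\tau}$ does not quite follow from Proposition \ref{braceprod}, which gives the brace (hence group) structure and therefore a priori only a conjugate of such a subgroup. The clean justification of your assertion $G\cap\Hol(\Z_p)=\langle(1,1,0,1)\rangle$ is: since $p\neq 3,7$, $p$ does not divide $|\Hol(E)|$, so the Sylow $p$-subgroup of $G$ projects trivially to $\Hol(E)$ and, being of order $p$ inside $\Hol(\Z_p)$, must be the translation subgroup $\Z_p$; regularity (or the order count $|G|=8p$ together with $p\nmid|\Hol(E)|$) then forces the kernel of the projection $G\to\Hol(E)$ to be exactly this $\Z_p$. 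With that line added, your parametrisation and the ensuing orbit argument are complete.
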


\begin{proof} We know that groups of order $8p$ are semidirect products $G=\Z_p\rtimes_{\tau} F$ with $F$ a group of order $8$ and $\tau:F\to\Aut(\Z_p)$ a group homomorphism.

For a given couple $(F,\tau)$ the semidirect product is
$$
G=\Z_p\rtimes_{\tau} F=
\{(m,\tau(f), f)\mid\  m\in\Z_p,\  f\in F\}\subseteq
(\Z_p\rtimes \Z_p^*)\times \Hol(E)=\Hol(N)
$$
as in Example \ref{ex}. As we pointed out there,
the action on $N$ is given by
$(m,k,f)(z,x)=(m+kz, fx)$. $G$ containing $\Z_p$ gives transitivity in the first component and $G$ is regular in $\Hol(N)$ if and only if $F$ is regular in $\Hol(E)$.

Let us describe inner automorphisms of $\Hol(N)=(\Z_p\rtimes \Z_p^*)\times (E\rtimes \Aut(E))$. We write elements in
$\Hol(N)$ as $(m,k,a,\sigma)$ accordingly. Since we are dealing with regular subgroups, we just have to consider conjugation by elements
$(i,\nu)\in\Aut(N)=\Z_p^*\times \Aut(E)$. Let $\Phi_{(i,\nu)}$ be the inner automorphism of $(i,\nu)$ inside $\mathrm{Hol}(N)$. Then,
$$
\Phi_{(i,\nu)}(m,k,a,\sigma)=
(0,i,0,\nu)(m,k,a,\sigma)(0,i,0,\nu)^{-1}=
$$
$$
=(im,ik,\nu(a),\nu\sigma)(0,i^{-1},0,\nu^{-1})=
(im,k,\nu(a),\nu\sigma\nu^{-1})
$$
If we work in $\Hol(E)$, conjugation by $\nu\in\Aut(E)$ is
$$
\Phi_{\nu}(a,\sigma)=(0,\nu)(a,\sigma)(0,\nu^{-1})=(\nu(a),\nu\sigma\nu^{-1}).
$$
Let
$
G=\Z_p\rtimes_{\tau} F=
\{(m,\tau(a,\sigma), a,\sigma)\mid m\in\mathbb{Z}_p,\,(a,\sigma)\in F\}.
$
Then,
$$
\Phi_{(i,\nu)}(G)
=\{ (im, \tau(a,\sigma),\nu(a),\nu\sigma\nu^{-1})\mid m\in\mathbb{Z}_p,\,(a,\sigma)\in F\}.
$$
Since $i\in \Z_p^*$, $im$ runs over $\Z_p$ as $m$ does.
Therefore, if $(F',\tau')$ is another pair, we have
$$\Phi_{(i,\nu)}(G)=\Z_p\rtimes_{\tau'} F'\iff
F'=\Phi_{\nu}(F),\mbox{ and }
\tau=\tau'\circ \Phi_{\nu}|_F.
$$
Let us observe that in that case $\ker\tau'=\Phi_{\nu}(\ker\tau)$.
\end{proof}

\begin{remark}
The same result is valid for sizes $2^np$ with $p$ not dividing $2^n-1$, when all groups are semidirect products of the unique $p$-Sylow subgroup and a $2$-Sylow subgroup.
\end{remark}

In the previous section we have classified direct products, namely those cases with trivial morphism $\tau$. Now we are able to classify and count also proper semidirect products.

From section \ref{direct}  we know how many conjugacy classes of regular subgroups $\Hol(E)$ has and we have classified them according to their isomorphism types.
For each type we have to consider the possible morphisms $\tau$ and its conjugation class under $\Aut(E)$, as specified in Proposition \ref{determsemid}. From now on, the kernel of $\tau$ will be referred to as the kernel of the brace (or conjugation class of regular subgroups) determined by the pair $(F,\tau)$.

\subsection{$F \simeq \Z_8$}
This type only occurs with $E=\Z_8$ and we use the same notations of example \ref{ex}. Recall that $\Aut(E)=\Z_8^*$ and its nontrivial elements $l$ have order $2$.

 If $F$ is isomorphic to the cyclic group $\Z_8$ there is a unique morphism $\tau: F\to \Z_p^*$ with kernel of order $4$, the one sending generators to $-1$ and non-generators to $1$.

 If $p\equiv 1\bmod 4$, then $\Z_p^*$ has a (unique) subgroup of order $4$. Let $\zeta_4$ be a generator. Given a generator $(a,l)$ of $F$, we have two different morphisms with kernel of order $2$:  $\tau_1(a,l)=\zeta_4$ and $\tau_2(a,l)=\zeta_4^{-1}$. But then
 $\Phi_{-l}(a,l)=(-la,l)=(a,l)^{-1}$ and
 $\tau_1=\tau_2\circ\Phi_{-l}$. Every $\tau: F\to \Z_p^*$ with kernel of order $2$ is either $\tau_1$ or $\tau_2$ and therefore we have a unique pair $(F,\tau)$.

 If $p\equiv 1\bmod 8$, then $\Z_p^*$ has a (unique) subgroup of order $8$. Let $\zeta_8$ be a generator. Given a generator $(a,l)$ of $F$, we have $4$ different embeddings $F\to \Z_p^*$ given by $\tau_j(a,l)=\zeta_8^j$ for $j=1,3,5,7$.
 But then
 $$
 \tau_3=\tau_1\circ\Phi_{2+l}
 $$
 since $\Phi_{2+l}(a,l)=((2+l)a,l)=((1+l+l^2)a,l)=(a,l)^3$.
 Analogously, $\tau_5=\tau_1\circ\Phi_{3+2l}$ and
  $\tau_7=\tau_1\circ\Phi_{4+3l}$. Again, we have a unique pair $(F,\tau)$ for every $F$.

 \begin{proposition}\label{Z8}
  Let $p=5$ or $p\ge 11$ be a prime.
 \begin{enumerate}
 \item If $p\equiv 3,7\bmod 8$ there are $4$ left braces  with multiplicative group $\Z_p\rtimes \Z_8$. Two of them are direct products (kernel of order $8$) and the other two have kernel of order $4$.

 \item If $p\equiv 5\bmod 8$ there are $6$ left braces with multiplicative group $\Z_p\rtimes \Z_8$. Two of them are direct products, two of them have kernel of order $4$ and the other two have kernel of order $2$.

 \item If $p\equiv 1\bmod 8$ there are $8$ left braces with multiplicative group $\Z_p\rtimes Z_8$. Two of them are direct products, two of them have kernel of order $4$, two of them have kernel of order $2$ and the other two have trivial kernel.
 \end{enumerate}
 All the above braces have additive group $\Z_p\times\Z_8$.
 \end{proposition}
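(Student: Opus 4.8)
The plan is to invoke Proposition~\ref{determsemid} with $E=\Z_8$, the classification of morphisms $\Z_8\to\Z_p^*$ having already been essentially carried out in the discussion preceding the statement. First I would observe that a regular subgroup $F$ of $\Hol(E)$ isomorphic to $\Z_8$ forces $E=\Z_8$: the tables in the proof of Proposition~\ref{direct} show that $\Hol(\Z_4\times\Z_2)$ and $\Hol(\Z_2\times\Z_2\times\Z_2)$ have no regular subgroup of type $\Z_8$, while $\Hol(\Z_8)$ has exactly two conjugacy classes of them. Hence every brace under consideration has additive group $N=\Z_p\times\Z_8$, and by Proposition~\ref{determsemid} its conjugacy class corresponds to a pair $(F,\tau)$ with $F$ one of those two classes and $\tau\colon F\to\Z_p^*$ taken modulo the equivalence $\tau\sim\tau'\circ\Phi_\nu|_F$, $\nu\in\Aut(\Z_8)$. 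So the total count is the sum, over the two classes of $F$, of the number of equivalence classes of such $\tau$.

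Next I would classify the morphisms $\tau\colon F\simeq\Z_8\to\Z_p^*$ up to equivalence. Such a $\tau$ is determined by the cyclic image $\langle\tau(g)\rangle$ of a generator $g$ of $F$, whose order $d$ divides $\gcd(8,p-1)$; the kernel of the brace then has order $8/d$, which by the final remark in the proof of Proposition~\ref{determsemid} is a conjugacy invariant. For $d=1$ (trivial $\tau$, a direct product, kernel of order $8$) and for $d=2$ (generator $\mapsto-1$, kernel of order $4$, available for every odd $p$) there is a unique $\tau$, hence a single class. For $d=4$, available precisely when $p\equiv 1\bmod 4$, the generator may go to either generator $\zeta_4^{\pm1}$ of the order-$4$ subgroup of $\Z_p^*$; the two maps differ by the inversion automorphism of $F$, which is realized by some $\Phi_\nu$, $\nu\in\Aut(\Z_8)$ (the identity $\Phi_{-l}(a,l)=(a,l)^{-1}$ recorded above), so they are equivalent and yield a single class, with kernel of order $2$. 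For $d=8$, available precisely when $p\equiv 1\bmod 8$, the four embeddings $g\mapsto\zeta_8^j$, $j\in\{1,3,5,7\}$, are all equivalent because $\Aut(\Z_8)$-conjugation realizes every automorphism of $F\cong\Z_8$ (the maps $\Phi_{2+l},\Phi_{3+2l},\Phi_{4+3l}$ of the preceding discussion), giving a single class, with trivial kernel.

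Assembling the pieces: for each of the two classes of $F$ the number of $\tau$-classes is $1+1$ if $p\equiv 3,7\bmod 8$, $1+1+1$ if $p\equiv 5\bmod 8$, and $1+1+1+1$ if $p\equiv 1\bmod 8$. Since the two conjugacy classes of $F$ in $\Hol(\Z_8)$ are distinct, Proposition~\ref{determsemid} guarantees that pairs built from them never yield conjugate regular subgroups of $\Hol(N)$, so the totals are $4$, $6$, $8$ respectively, distributed by kernel order exactly as claimed, and all the braces obtained have additive group $\Z_p\times\Z_8$.

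The only non-formal step is the claim that $\Aut(\Z_8)$-conjugation on $\Hol(\Z_8)$ induces all of $\Aut(F)\cong\Z_2\times\Z_2$ on a regular $\Z_8$-subgroup $F$, so that all embeddings of $F$ into a common cyclic subgroup of $\Z_p^*$ collapse to one equivalence class; this is precisely the computation already displayed before the statement, and I would only need to confirm it applies verbatim to each of the two conjugacy classes of $F$ — which it does, since writing $F=\langle(a,l)\rangle$ one has $l\in\Z_8^*$, whence $l^2=1$ in $\Z_8$ and $-l,\ 2+l,\ 3+2l,\ 4+3l$ all lie in $\Z_8^*=\Aut(\Z_8)$. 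Everything else is bookkeeping with Proposition~\ref{determsemid}.
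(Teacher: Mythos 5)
Your proposal is correct and follows essentially the same route as the paper: it combines Proposition~\ref{determsemid}, the tables showing that regular $\Z_8$ subgroups occur only in $\Hol(\Z_8)$ (in two conjugacy classes), and the conjugations $\Phi_{-l},\Phi_{2+l},\Phi_{3+2l},\Phi_{4+3l}$ worked out in the discussion preceding the statement to show each kernel order contributes exactly one class of $\tau$ per $F$. Your extra verification that these computations apply verbatim to both conjugacy classes (using $l^2=1$ in $\Z_8^*$) is exactly the implicit step in the paper's argument.
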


\subsection{$F \simeq \Z_4\times \Z_2$}\label{z4z2}

For $E=\Z_8$ and cyclic kernel of order $4$ it is the case of example \ref{ex}. We have just one $F$ and two non conjugate morphisms $\tau$.

On the other hand, there is a unique morphism $\tau: F\to \Z_p^*$ with kernel isomorphic to $\Z_2\times \Z_2$, it sends the elements of order $4$ to $-1$ and the other elements to $1$. For every $E$ we will have just as many semidirect products with elementary kernel as direct products.

If $p\equiv 1 \bmod 4$, then $\Z_p^*$ has a subgroup of order $4$. Let $\zeta_4$ be a generator. In this case we have morphisms $\tau$ with kernel of order $2$. Using the notation of example \ref{ex} the kernel can be either
$$
\langle f_2\rangle=\langle (1,7)\rangle
\ \mbox{ or }\
\langle f_4^2f_2\rangle=\langle (4,1)(1,7)\rangle=\langle (5,7)\rangle
$$
which are conjugate under $\Phi_5$. The four possible morphisms are
defined by
$$
\tau_1(2,5)=\tau_2(2,5)=\zeta_4,\quad \tau_1(1,7)= 1,\  \tau_2(1,7)= -1,
$$
$$
\tau_3(2,5)=\tau_4(2,5)=-\zeta_4,\quad \tau_3(1,7)= 1,\  \tau_4(1,7)= -1.
$$
Since $\Phi_5(2,5)=(2,5)$, we have $\tau_1=\tau_2\circ \Phi_5$ and
$\tau_3=\tau_4\circ \Phi_5$ while $\tau_1$ and $\tau_3$ are not conjugate.

\begin{proposition}
 Let $p=5$ or $p\ge 11$ be a prime.
 \begin{enumerate}
     \item If $p\equiv 3\bmod 4$ there are 4 left braces with additive group
     $\Z_p\times \Z_8$ and multiplicative group $\Z_p\rtimes (\Z_4\times \Z_2)$. One of them is a direct product, 2 of them have cyclic kernel of order 4 and the remaining one has kernel isomorphic to the Klein group.
     \item If $p\equiv 1\bmod 4$ there are 6 left braces with additive group
     $\Z_p\times \Z_8$ and multiplicative group $\Z_p\rtimes (\Z_4\times \Z_2)$. The distribution is as in 1 plus two braces with kernel of order 2.
 \end{enumerate}
\end{proposition}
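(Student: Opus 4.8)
The plan is to apply Proposition~\ref{determsemid} with $E=\Z_8$. By Example~\ref{ex}, $\Hol(\Z_8)$ has a single conjugacy class of regular subgroups isomorphic to $\Z_4\times\Z_2$, so I would fix the representative $F=\langle f_4\rangle\times\langle f_2\rangle$ with $f_4=(2,5)$ and $f_2=(1,7)$ inside $\Hol(\Z_8)=\Z_8\rtimes\Z_8^*$; the braces to be counted are then in bijection with the classes of group morphisms $\tau\colon F\to\Z_p^*$ under the relation $\tau\sim\tau'\iff\tau=\tau'\circ\Phi_\nu|_F$ for some $\nu\in\Aut(E)=\Z_8^*$ with $\Phi_\nu(F)=F$. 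The first step is to pin down the subgroup $S=\{\nu\in\Z_8^*:\Phi_\nu(F)=F\}$. Using that $\Phi_\nu(a,l)=(\nu a\bmod 8,\,l)$ here, one checks $\Phi_3(f_2)=(3,7)\notin F$ and $\Phi_7(f_2)=(7,7)\notin F$, while $\Phi_5(f_4)=f_4$ and $\Phi_5(f_2)=(5,7)=f_4^2f_2\in F$; since $S$ is a subgroup of $\Z_8^*$, this forces $S=\langle 5\rangle$, with $\Phi_5|_F$ the order-two automorphism of $F$ fixing $f_4$ and sending $f_2\mapsto f_4^2f_2$.

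The second step is to enumerate the morphisms and the $S$-action. A morphism $\tau\colon\Z_4\times\Z_2\to\Z_p^*$ is determined by the pair $(x,y)=\bigl(\tau(f_4),\tau(f_2)\bigr)$ subject to $x^4=y^2=1$, its image having order dividing $\gcd(4,p-1)$; and $\tau\circ\Phi_5|_F$ corresponds to $(x,x^2y)$. The pairs with $x^2=1$, that is $x\in\{1,-1\}$, are fixed by $S$ and occur for every odd $p$: the pair $(1,1)$ is the trivial morphism and gives the direct product of Proposition~\ref{direct}; $(1,-1)$ has kernel $\langle f_4\rangle\cong\Z_4$; $(-1,-1)$ has kernel $\langle f_4f_2\rangle\cong\Z_4$; and $(-1,1)$ has kernel $\langle f_4^2,f_2\rangle\cong\Z_2\times\Z_2$. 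That is the list of four braces in part~1. If moreover $p\equiv 1\pmod 4$, I would let $\zeta_4$ generate the order-$4$ subgroup of $\Z_p^*$; the remaining morphisms are those with $x\in\{\zeta_4,\zeta_4^{-1}\}$, and since $x^2=-1$ the map $(x,y)\mapsto(x,-y)$ pairs $(\zeta_4,1)$ with $(\zeta_4,-1)$ and $(\zeta_4^{-1},1)$ with $(\zeta_4^{-1},-1)$; this yields exactly two more classes, each with kernel of order $2$ (within each class one representative has kernel $\langle f_2\rangle$ and the other $\langle f_4^2f_2\rangle$, the two subgroups being $S$-conjugate, as in the discussion above). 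Hence $p\equiv 1\pmod 4$ produces $4+2=6$ braces, which is part~2.

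The final step is to confirm that the classes listed are pairwise distinct. The two classes with kernel $\cong\Z_4$ are told apart by their kernels $\langle f_4\rangle$ and $\langle f_4f_2\rangle$, which are not $S$-conjugate because $\Phi_5$ fixes $f_4$; the class with Klein kernel is separated from these by the isomorphism type of the kernel; and when $p\equiv 1\pmod 4$ the two order-$2$-kernel classes are separated by the value $\tau(f_4)\in\{\zeta_4,\zeta_4^{-1}\}$, which is $S$-invariant since $\Phi_5$ fixes $f_4$. The main obstacle is exactly the computation $S=\langle 5\rangle$ — that no subgroup of $\Aut(E)$ larger than $\langle 5\rangle$ stabilises $F$ — but this is the small check flagged above, of the same nature as the one already made in Example~\ref{ex}; the rest is the routine bookkeeping of homomorphisms from $\Z_4\times\Z_2$ into a cyclic group and of their kernels.
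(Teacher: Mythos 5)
Your proposal is correct and follows essentially the same route as the paper: it invokes Proposition~\ref{determsemid} together with the unique conjugacy class of regular $\Z_4\times\Z_2$ subgroups of $\Hol(\Z_8)$ from Example~\ref{ex}, enumerates the homomorphisms $\tau\colon F\to\Z_p^*$ by their values $(\tau(f_4),\tau(f_2))$, and identifies them under conjugation, obtaining the same $1+2+1$ count plus two order-$2$-kernel classes when $p\equiv 1\pmod 4$. The only difference is cosmetic: you make explicit the stabilizer computation $S=\langle 5\rangle\subset\Z_8^*$ of $F$, which the paper handles implicitly by checking conjugacy of kernels and using $\Phi_5$, so your write-up is, if anything, slightly more complete on that point.
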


Now we consider $E=\Z_4\times \Z_2$. We already know that there are 6 braces which are direct products and 6 which are semidirect products with kernel isomorphic to the Klein group.

The automorphism group of $\Z_4\times \Z_2$ is the dihedral group of order $8$. Using the classical notation of rotation and symmetry for its generators we have
$$
r(a,b)=(a+2b,a+b), \quad s(a,b)=(a,a+b) \quad \mbox{ for } (a,b)\in \Z_4\times \Z_2.
$$
It is easy to check that
 $1+\sigma+\sigma^2+\sigma^3=0\in \End(E)$ for every $\sigma\in\Aut(E)$.
Therefore, we can write
$$\Hol(\Z_4\times \Z_2)=(\Z_4\times \Z_2)\rtimes D_{2\cdot 4}=\{ ((a,b), r^i s^{j})\mid a \bmod 4, b \bmod 2, 0\le i\le 3,\  j=0,1\}$$
and since $((a_1, b_1),\sigma_1)((a_2, b_2),\sigma_2)=((a_1, b_1)+\sigma_1(a_2,b_2),\sigma_1\sigma_2)$,
in $\Hol(E)$ all elements have order dividing $4$.

The conjugation by elements of $\Aut(E)$ is as follows
$$
((0,0), \nu)((a, b),\sigma)((0,0), \nu)^{-1}=
(\nu(a,b),\nu\sigma)((0,0), \nu^{-1})=(\nu(a,b),\nu\sigma\nu^{-1})
$$
so that we can work with conjugacy classes in $D_{2\cdot 4}$ and orbits under its action on $E$.
$((2,0), id)$ is invariant under conjugation since $(2,0)$ is fixed by $\Aut(E)$.

Since we are interested in regular subgroups we can rule out elements not acting with trivial stabilizers. The action is $((a,b),\sigma) (x,y)\to (a,b)+\sigma(x,y)$ and we have to rule out elements $((a,b),\sigma)$ such that $(a,b)$ is in the image of the endomorphism $1-\sigma$.

We have 6 conjugacy classes of elements of order 2 acting with trivial stabilizers
	\begin{center}
		\begin{tabular}{l|c}
	$\#$  &\\
\hline
		$1$&$((2,0),\ id)$ \\

	    $2$&$((0,1),\ id),\  ((2,1),\ id)$  \\

	    $2$&$((0,1),\ r^2),\ ((2,1),\ r^2)$\\

	    $4$&$((1,0),\ r^2),\ ((1,1),\ r^2)),\ ((3,0),\ r^2),\ ((3,1),\ r^2)$\\

	    $4$&$((2,0),\ s),\ ((2,0),\ r^2s),\ ((2,1),\ s),\ ((0,1),\ r^2)$\\

	    $4$&$((1,0),\ rs),\ ((1,1),\ r^3s),\ ((3,0),\ rs),\ ((3,1),\ r^3s)$\\

		\end{tabular}
	\end{center}
and  5 conjugacy classes of elements of order 4 acting with trivial stabilizers

\begin{center}
		\begin{tabular}{l|c}
	$\#$  &  \\
		\hline
		\hline
		$4$&$((1,0),\ id),\ ((1,1),\ id),\ ((3,0),\ id),\ ((3,1),\ id)$ \\
\hline
	    $4$&$((0,1),\ rs),\ ((2,1),\ rs),\ ((0,1),\ r^3s),\ ((2,1),\ r^3s)$  \\
\hline
	    $4$& $((1,1),\ rs),\ ((3,1),\ rs),\ ((1,0),\ r^3s),\ ((3,0),\ r^3s)$ \\
\hline
	    $8$&$((1,0),\ s),\ ((1,1),\ s),\ ((3,0),\ s),\ ((3,1),\ s),$\\
	
	    &$((1,0),\ r^2s),\ ((1,1),\ r^2s),\ ((3,0),\ r^2s),\ ((3,1),\ r^2s)$\\
\hline	
$8$&$((1,0),\ r),\ ((1,1),\ r),\ ((3,0),\ r),\ ((3,1),\ r)$\\
	    &$((1,0),\ r^3),\ ((1,1),\ r^3),\ ((3,0),\ r^3),\ ((3,1),\ r^3)$\\
\hline
		\end{tabular}
	\end{center}

From this we have 17 subgroups of order $2$ and  14 cyclic subgroups of order $4$.
Checking commutation of generators and conjugacy by $\Aut(E)$, we obtain the 6 conjugacy classes of
regular subgroups of $\Hol(E)$ we are looking for:
$$
\begin{array}{l}
F_1=\langle( (1,0),\ r)\rangle\times \langle (     (2,0),\ id)\rangle\\
F_2=\langle( (1,0),\ id)\rangle\times \langle (   (0,1),\  id)\rangle\\
F_3=\langle( (1,0),\ id)\rangle\times \langle (   (1,1),\  r^3s)\rangle\\
F_4=\langle( (1,0),\ s)\rangle\times \langle (    (2,0),\  id)\rangle\\
F_5=\langle( (0,1),\ rs)\rangle\times \langle (   (1,1),\  r^2)\rangle\\
F_6=\langle( (1,1),\ rs)\rangle\times \langle (   (0,1),\  r^2)\rangle\\
\end{array}.
$$

Now, for each $i=1,\dots,6$, we consider morphisms $\tau^{(i)}\colon F_i\to\Aut(\Z_p)$ and look for conjugate kernels.

In case of kernel of order 4, we proceed as in Example \ref{ex} with $f_4$ and $f_4f_2$. That is, if in the presentation of $F_i$ above we call $f_4$ the order $4$ element and $f_2$ the order $2$ one, we determine morphisms $\tau_1^{(i)}$, $\tau_2^{(i)}$ with kernels $\langle f_4\rangle$ and $\langle f_4f_2\rangle$ respectively and study their conjugation classes:
$$
\begin{array}{lll}
((1,0),r) & ((1,0),r)((2,0),id)=((3,0),r)& \mbox{ conjugated by }  \Phi_{r^2}\\
((1,0),id) & ((1,0),id)((0,1),id)=((1,1),id)& \mbox{ conjugated by } \Phi_{s}\\
((1,0),id) & ((1,0),id)((1,1),r^3s)=((2,1),r^3s)& \mbox{ not conjugated } \\
((1,0),s) & ((1,0),s)((2,0),id)=((3,0),s)& \mbox{ conjugated by } \Phi_{r^2}\\
((0,1),rs) & ((1,0),rs)((1,1),r^2)=((1,0),r^3s)& \mbox{ not conjugated } \\
((1,1),rs) & ((1,1),rs)((0,1),r^2)=((3,0),r^3s)& \mbox{ conjugated by } \Phi_{r^2s}\\
\end{array}
$$
Note that every conjugation $\Phi_{\nu}$ in the above table leaves the corresponding $F_i$ invariant. The first non-conjugacy class derives from non-conjugacy in $D_{2\cdot4}$ of $id$ and $r^3s$ while the second one derives from the non-existence of automorphisms carrying $(0,1)$ to $(1,0)$.
Since a kernel of order $4$ determines $\tau:F\to \Aut(\Z_p)$ we have that $F_3$ and $F_5$ provide two different semidirect products inside $\Hol(N)$ and each of the other $F_i$ provides just one.

If $p\equiv 1\bmod 4$ we can consider semidirect products with kernel of order $2$, and we proceed as before with possible kernels generated by $f_2$ and $f_4^2f_2$:
$$
\begin{array}{lll}
((2,0),id) & ((0,1),r^2)& \mbox{ not conjugated}  \\
((0,1),id) & ((2,1),id)& \mbox{ conjugated by  } \Phi_{r}
\\
((1,1),r^3s) & ((3,1),r^3s)& \mbox{ conjugated by  } \Phi_{r^2} \\
((2,0),id)&((0,1),id)& \mbox{ not conjugated } \\
((1,1),r^2)&((3,1),r^2)& \mbox{ conjugated by } \Phi_{r^2}\\
((0,1),r^2)&((2,1),r^2)& \mbox{ conjugated by } \Phi_{r}\\
\end{array}
$$
Both cases of non-conjugacy come from $\langle( (2,0),id)\rangle$ being normal.

Let us analyze the case of $F_2$, since the conjugation of kernels is not enough. The four possible group homomorphisms are
$$
\begin{array}{rccl}
     \tau_{\pm,\pm}:&\ F_2&\longrightarrow &\Z_p^*  \\
   &  ((0,1),id) & \to &\pm \zeta_4\\
  &   ((2,1),id)&  \to &\pm 1
\end{array}.
$$
We have conjugations
$$
\Phi_{r^2}:\begin{array}{l}
((1,0),id)\to ((1,0),id)^3=((3,0),id)\\
((0,1),id)\to (((0,1),id)\\
\end{array}
$$
$$
\Phi_{r^3s}:\begin{array}{l}
((1,0),id)\to ((1,0),id)\\
((0,1),id)\to (((2,1),id)\\
\end{array}
\qquad
\Phi_{rs}:\begin{array}{l}
((1,0),id)\to (((3,0),id)\\
((0,1),id)\to (((2,1),id)\\
\end{array}
$$
which give
$$
\tau_{-+}=\tau_{++}\circ\Phi_{r^2}, \quad
\tau_{+-}=\tau_{++}\circ\Phi_{r^3s}, \quad
\tau_{--}=\tau_{++}\circ\Phi_{rs}.
$$
Therefore, $F_2$ provides a unique conjugacy class.
In the following table we give the conjugations for all cases:
\begin{center}
		\begin{tabular}{l|ccc}
		\\
		\hline
		$F_2$& $\Phi_{r^2}$ $\Phi_{r^3s}$ $\Phi_{rs}$ \\
		$F_3$& $\Phi_{rs}$ $\Phi_{r^3s}$ $\Phi_{r^2}$ \\
		$F_5$& $\Phi_{rs}$ $\Phi_{r^2}$ $\Phi_{r^3s}$ \\
		$F_6$& $\Phi_{r^2}$ $\Phi_{rs}$ $\Phi_{r^3s}$ \\
		\end{tabular}
\end{center}
Therefore, each one of these groups provides exactly one conjugacy class.
For $F_1$ and $F_4$ a generator of order $2$ is invariant under conjugation. Since we have $\Phi_{s}((1,0),r))=((1,1),r^3)=((1,0),r)^3$ and
$\Phi_{r^2s}((1,0),s))=((3,1),s)=((1,0),s))^3$,
 each group provides exactly two conjugacy classes.

\begin{proposition}
 Let $p=5$ or $p\ge 11$ be a prime.
 \begin{enumerate}
     \item If $p\equiv 3\bmod 4$ there are 20 left braces with additive group
     $\Z_p\times \Z_4\times \Z_2$ and multiplicative group $\Z_p\rtimes (\Z_4\times \Z_2)$. Six of them are direct products, 8 of them have cyclic kernel of order $4$ and the remaining 6 have kernel isomorphic to the Klein group.
     \item If $p\equiv 1\bmod 4$ there are 28 left braces with additive group
     $\Z_p\times  \Z_4\times \Z_2$ and multiplicative group $\Z_p\rtimes (\Z_4\times \Z_2)$. The distribution is as in 1 plus 8 braces with kernel of order $2$.
 \end{enumerate}
\end{proposition}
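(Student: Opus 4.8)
\emph{Proof plan.} The plan is to specialise Proposition~\ref{determsemid} to $E=\Z_4\times\Z_2$. By that result, the braces with additive group $\Z_p\times E$ and multiplicative group $\Z_p\rtimes(\Z_4\times\Z_2)$ are counted by the pairs $(F_i,\tau)$, where $F_i$ runs over the six conjugacy classes $F_1,\dots,F_6$ of regular subgroups of $\Hol(E)$ isomorphic to $\Z_4\times\Z_2$ determined above, and, for each $F_i$, $\tau$ runs over representatives of the orbits of the stabiliser $S_i=\{\nu\in\Aut(E):\Phi_\nu(F_i)=F_i\}$ acting on $\operatorname{Hom}(F_i,\Z_p^*)$ by $\tau\mapsto\tau\circ\Phi_\nu|_{F_i}$. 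Since $F_1,\dots,F_6$ lie in distinct conjugacy classes, pairs attached to different $F_i$ are never conjugate, so the total is the sum over $i$ of these orbit numbers. (The hypothesis $p=5$ or $p\ge 11$ is what makes Proposition~\ref{braceprod} available, so that this really enumerates all braces of the stated type.)

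First I would sort $\operatorname{Hom}(\Z_4\times\Z_2,\Z_p^*)$ by kernel. As $\Z_p^*$ is cyclic and $\Z_4\times\Z_2$ has exponent $4$, the image of any $\tau$ is cyclic of order $1$, $2$ or $4$, the order $4$ occurring only for $p\equiv 1\pmod 4$; hence $\ker\tau$ has order $8$, $4$ or $2$. Write $F_i=\langle f_4\rangle\times\langle f_2\rangle$ as in the presentations above. The trivial morphism (kernel of order $8$) gives one direct product for each $F_i$, hence $6$. A morphism with kernel of order $4$ has image $\{\pm1\}$, hence is determined by its kernel; the three subgroups of order $4$ are $\langle f_4\rangle$, $\langle f_4f_2\rangle$ (cyclic) and $\langle f_4^2,f_2\rangle$ (Klein). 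The Klein-kernel morphism is the one sending exactly the four order-$4$ elements of $F_i$ to $-1$, so it is fixed by every automorphism of $F_i$, in particular by all of $S_i$, and yields one class for each $F_i$: these are the $6$ braces with Klein kernel.

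For the two cyclic kernels of order $4$ I would invoke the conjugation table established above: for $i\in\{1,2,4,6\}$ the element $\Phi_\nu$ exhibited there lies in $S_i$ and carries $\langle f_4\rangle$ onto $\langle f_4f_2\rangle$, so the two morphisms form one orbit; for $i\in\{3,5\}$ no such $\nu$ exists — for $F_3$ because $\mathrm{id}$ and $r^3s$ are not conjugate in $D_{2\cdot4}$, for $F_5$ because no automorphism of $E$ sends $(0,1)$ to $(1,0)$ — so $F_3$ and $F_5$ give two orbits each. This yields $1\cdot 4+2\cdot 2=8$ braces with cyclic kernel of order $4$, hence $6+8+6=20$ when $p\equiv 3\pmod 4$.

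For $p\equiv 1\pmod 4$ one adds the morphisms with kernel of order $2$, and here — this being the delicate part of the argument — a morphism is no longer pinned down by its kernel. Its image is the unique subgroup $\langle\zeta_4\rangle$ of order $4$ of $\Z_p^*$, the only admissible kernels are $\langle f_2\rangle$ and $\langle f_4^2f_2\rangle$, and each carries two morphisms according to whether $f_4\mapsto\zeta_4$ or $f_4\mapsto\zeta_4^{-1}$, so there are four morphisms per $F_i$. For $i\in\{2,3,5,6\}$ the three elements $\Phi_\nu$ listed in the last table fuse all four into a single orbit (for $F_2$ this is the explicit computation with $\Phi_{r^2}$, $\Phi_{r^3s}$, $\Phi_{rs}$). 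For $i\in\{1,4\}$ the order-$2$ generator $f_2=((2,0),\mathrm{id})$ of $F_i$ generates a normal (indeed central) subgroup of $\Hol(E)$, so $\langle f_2\rangle$ and $\langle f_4^2f_2\rangle$ are not conjugate, while $\Phi_s$ (resp.\ $\Phi_{r^2s}$) lies in $S_1$ (resp.\ $S_4$) and inverts $f_4$ while fixing $f_2$, so the two morphisms sharing a kernel are conjugate; hence $F_1$ and $F_4$ contribute two orbits each. This adds $1\cdot 4+2\cdot 2=8$ braces, giving $28$ when $p\equiv 1\pmod 4$. The only nonroutine checks are these last ones: one must follow the values of $\tau$ on the generators rather than just $\ker\tau$, exhibit $\Phi_\nu$ that simultaneously normalise $F_i$ and realise the desired change of $\tau$, and, for each "not conjugate" assertion, make sure the candidate list of $\nu$ is exhausted — which reduces to the conjugacy classes of $D_{2\cdot4}$ and the $\Aut(E)$-orbits on $\Z_4\times\Z_2$ tabulated above.
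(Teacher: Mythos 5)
Your proposal is correct and follows essentially the same route as the paper: it specializes Proposition~\ref{determsemid} to $E=\Z_4\times\Z_2$, sorts the morphisms $\tau$ by kernel, and uses the same conjugation data for $F_1,\dots,F_6$ (Klein kernel characteristic giving $6$; cyclic kernels conjugate except for $F_3,F_5$ giving $8$; for kernel of order $2$, fusion of all four morphisms for $F_2,F_3,F_5,F_6$ and two classes each for $F_1,F_4$ because $\langle((2,0),\mathrm{id})\rangle$ is central while $\Phi_s$, $\Phi_{r^2s}$ invert $f_4$, giving $8$). Your phrasing of the count as orbits of the stabilizer $S_i$ acting on $\operatorname{Hom}(F_i,\Z_p^*)$ is just a slightly more explicit formulation of the equivalence in Proposition~\ref{determsemid}, and the resulting totals $6+8+6=20$ and $20+8=28$ agree with the paper.
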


The last additive type is $E=\Z_2\times \Z_2\times \Z_2$. We already know that there are 3 braces which are direct products and 3 which are semidirect products with kernel isomorphic to the Klein group.

Since we can identify the additive group with the binary vector space of dimension $3$, its automorphism group is the group of $3\times 3$ invertible binary matrices and $\Hol(\Z_2\times \Z_2\times \Z_2)\simeq \F_2^3\rtimes \GL(3,2) $.
Therefore, we can write
$$\Hol(\Z_2\times \Z_2\times \Z_2)=\{ (v, M) :\quad v\in\F_2^3,\ M\in\GL(3,2)\}.$$
The operation is given by $
(v_1, M_1)(v_2, M_2)=(v_1+M_1v_2, M_1M_2)
$
and the action on $\F_2^3$ by $(v,M) u = v+Mu.$ In order to act with trivial stabilizers we need $v\not\in Im(M+Id).$

$\GL(3,2)$ is a simple group of order 168 which has
a unique conjugacy class of elements of order $2$, of length 21,
with representative
$$
S=    \begin{bmatrix}
1&0&0\\
0&1&1\\
0&0&1\\
    \end{bmatrix}
    $$
and a unique conjugacy class of elements of order $4$, of length 42,
with representative
$$
 Q=   \begin{bmatrix}
1&1&0\\
0&1&1\\
0&0&1\\
    \end{bmatrix}.
    $$
For $S$ we have rank$(S+Id)=1$ and $Im(S+Id)\subset \Ker(S+Id)$. For $Q$ we have rank$(Q+Id)=2$ and $Id+Q+Q^2+Q^3=0$.

The elements of order 2 in $\Hol(E)$ distribute in $3$ conjugacy classes of lengths 7, 42 and 42, respectively, but only two of them correspond to elements acting with trivial stabilizers. Since
$(v, M)^2=((M+Id)v, M^2)$,
 the element $(v,M)$ has order 2 if and only if either $M=Id$ and $v\ne 0$ or $M$ has order $2$ and $v=0$ or $v$ is eigenvector of eigenvalue $1$. Therefore, the elements of order $2$ acting with trivial stablilizers are
$$
(u,\ Id),\ (v_1,\ M),\ (v_2,\ M)
$$
$u\ne 0$, $M$ of order 2 and $v_1,v_2\in \ker(M+Id)$, $v_1,v_2\not\in Im(M+Id)$.

The elements of order 4 in $\Hol(N)$ distribute in 3 conjugacy classes of lengths 84, 168, 168, respectively. Again, only two of them correspond to actions with trivial stabilizers. 
Since $(v, M)^4=((M^3+M^2+M+Id)v, M^4)$, we can have
$M$ of order $2$ and $v$ one of the 4 vectors not in $\ker(M+Id)$ or
$M$ of order $4$ and any $v$, since $M^3+M^2+M+Id=0$.  Now, $M+Id$ has rank $2$ and we have 4 vectors in $Im(M+Id)$.

Let us now look for the three conjugacy classes of subgroups of $\Hol(E)$ isomorphic to $\Z_4\times \Z_2$. Let us use the notation $e_1,e_2,e_3$ for the canonical basis of $\F_2^3$. Since $e_3$ is not an eigenvector of $S$, the element $(e_3,S)$ has order $4$ in $\Hol(E)$.  Let us look for elements of order 2 commuting with it and different from $(e_3,S)^2=(e_2,Id)$.
For elements of type $(u, Id)$ we have
$$
(e_3,S)(u,Id)=(u,Id)(e_3,S)\iff e_3+Su=u+e_3\iff u\in \ker(S+Id).
$$
We can choose $u=e_1$ or $u=e_1+e_2$ but both give the same regular subgroup
$$\begin{array}{rcl}
F_1&=&\langle (e_3, S)\rangle \times \langle (e_1, Id)\rangle=\\
&=&\{(0,Id),(e_3, S),(e_2, Id),(e_2+e_3, S),\\
&&\ (e_1, Id),(e_1+e_3, S),(e_1+e_2, Id),(e_1+e_2+e_3, S)\}
\end{array}
$$
with pairs of non-eigenvectors with $S$ and eigenvectors with $Id$.

For elements of order 2 of type $(v, M)$ we have
$$
(e_3,S)(v,M)=(v,M)(e_3,S)\iff e_3+Sv=v+Me_3 \mbox{ and } MS=SM
$$
Note that we cannot take $M=S$ because $v$ is an eigenvector of $M$ and $e_3$ is not an eigenvector of $S$.
We need elements of order $2$ in the centralizer of $S$ in $\GL(3,2)$, which is a dihedral group of order $8$.
We take the unique possible matrix
$$M=\begin{bmatrix}
1&0&0\\
1&1&1\\
0&0&1\\
    \end{bmatrix}
$$
and $v=e_1+e_3$, which is in the kernel of $M+Id$ but not in the image.
Then, $e_3+S(e_1+e_3)=e_1+e_2=e_1+e_3+Me_3$ and
we obtain a regular subgroup
$$\begin{array}{rcl}
F_2&=&\langle (e_3, S)\rangle \times \langle (e_1+e_3, M)\rangle=\\
&=&\{(0,Id),(e_3, S),(e_2, Id),(e_2+e_3, S),\\
&&\ (e_1+e_3, M),(e_1+e_2, MS),(e_1+e_2+e_3, M),(e_1, MS)\}.
\end{array}
$$
Taking the other eigenvector $e_1+e_2+e_3$ we obtain the same subgroup.

Now we take the element $(e_3,\ Q)$ of order 4 and search for elements of order 2 commuting with it. If it is of type
$(u,\ Id)$ we need $u+e_3=e_3+Qu$ and we should take the unique non-zero eigenvector of $Q$, which is $e_1$.
We obtain a regular subgroup
$$\begin{array}{rcl}
F_3&=&\langle (e_3, Q)\rangle \times \langle (e_1, Id)\rangle=\\
&=&\{(0,Id),(e_3, Q),(e_2, Q^2),(e_1+e_2+e_3, Q^3),\\
&&\ (e_1, Id),(e_1+e_3, Q),(e_1+e_2, Q^2),(e_2+e_3, Q^3)\}
\end{array}
$$
Let us remark that the centralizer of $Q$ in $\GL(3,2)$ is the subgroup generated by $Q$ and therefore there are no elements of order $2$ commuting with $Q$ except for $Q^2$.

The next step is once again to consider morphisms $\tau_i:F_i\to \Aut(\Z_p)$ and check for conjugate kernels.
Recall that conjugation by an element of $\Aut(E)$ is $\Phi_D(v,A)=(Dv, DAD^{-1}).$

In case of kernel of order 4 we have, respectively,
$$
\begin{array}{lll}
(e_3,S) &  (e_1+e_3,S)& \mbox{ conjugated by }  \Phi_{M'}\\
(e_3,S) & (e_1+e_2,MS)& \mbox{ conjugated by } \Phi_{\tilde M}\\
(e_3,Q)&(e_1+e_3,Q)& \mbox{ conjugated by } \Phi_{Q^2}\\
\end{array}
$$
where
$$M'=\begin{bmatrix}
1&0&1\\
0&1&0\\
0&0&1\\
    \end{bmatrix}
$$
is in the centralizer of $S$ and
$$\tilde M=\begin{bmatrix}
0&0&1\\
1&1&1\\
1&0&0\\
    \end{bmatrix}
$$
is in the centralizer of $M$ and such that $\tilde M\, S\, \tilde M^{-1}=MS$.
Therefore, every $F_i$ provides a unique conjugacy class.

If $p\equiv 1\bmod 4$ we can consider semidirect products with kernel of order $2$, and we proceed as before with possible kernels:
$$
\begin{array}{lll}
(e_1,Id) &  (e_1+e_2,Id)& \mbox{ conjugated by }  \Phi_{M}\\
(e_1+e_3,M) & (e_1+e_2+e_3,M)& \mbox{ conjugated by } \Phi_{MS}\\
(e_1,Id)&(e_1+e_2,Q^2)& \mbox{ not conjugate } \\
\end{array}
$$
Note that $MS\in\Cent_{\GL(3,2)}(M)\cap \Cent_{\GL(3,2)}(S)$. 

For $F_1$ the four possible group homomorphisms are
$$
\begin{array}{rccl}
     \tau_{\pm,\pm}:&\ F_2&\longrightarrow &\Z_p^*  \\
   &  (e_3,S) & \to &\pm \zeta_4\\
  &   (e_1,id)&  \to &\pm 1
\end{array}.
$$
We have
$$
\tau_{-+}=\tau_{++}\circ\Phi_{S}, \quad
\tau_{+-}=\tau_{++}\circ\Phi_{MS}, \quad
\tau_{--}=\tau_{++}\circ\Phi_{M}.
$$
and $F_1$ provides a unique conjugacy class. For $F_2$ the four possible group homomorphisms are
$$
\begin{array}{rccl}
     \tau_{\pm,\pm}:&\ F_2&\longrightarrow &\Z_p^*  \\
   &  (e_3,Q) & \to &\pm \zeta_4\\
  &   (e_1,Id)&  \to &\pm 1
\end{array}.
$$
We have
$$
\tau_{-+}=\tau_{++}\circ\Phi_{M}, \quad
\tau_{+-}=\tau_{++}\circ\Phi_{MS}, \quad
\tau_{--}=\tau_{++}\circ\Phi_{S}.
$$
and $F_2$ provides also a unique conjugacy class. For $F_3$, since $\Phi_{S}$ leaves $(e_1,Id)$ invariant and takes $(e_3, Q)$ to $(e_2+e_3, Q^3)$ we have
$\tau_{-+}=\tau_{++}\circ\Phi_{S}$ and therefore $F_3$ provides two different conjugacy classes of semidirect products with kernel of order $2$.

\begin{proposition}
 Let $p=5$ or $p\ge 11$ be a prime.
 \begin{enumerate}
     \item If $p\equiv 3\bmod 4$ there are 9 left braces with additive group
     $\Z_p\times \Z_2\times \Z_2\times \Z_2$ and multiplicative group $\Z_p\rtimes (\Z_4\times \Z_2)$. Three of them are direct products, 3 of them have cyclic kernel of order $4$ and the remaining 3 have kernel isomorphic to the Klein group.
     \item If $p\equiv 1\bmod 4$ there are 13 left braces with additive group
     $\Z_p\times  \Z_2\times \Z_2\times \Z_2$ and multiplicative group $\Z_p\rtimes (\Z_4\times \Z_2)$. The distribution is as in case 1 plus 4 braces with kernel of order $2$.
 \end{enumerate}
\end{proposition}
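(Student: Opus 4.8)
The plan is to assemble the case analysis carried out above into a count organised by Proposition~\ref{determsemid}. Set $E=\Z_2\times\Z_2\times\Z_2$. By that proposition, the left braces with additive group $\Z_p\times E$ and multiplicative group $\Z_p\rtimes(\Z_4\times\Z_2)$ correspond bijectively to pairs $(F,\tau)$, with $F$ a representative of a conjugacy class of regular subgroups of $\Hol(E)$ isomorphic to $\Z_4\times\Z_2$ and $\tau\colon F\to\Z_p^*$ a group morphism, where $(F,\tau)\simeq(F',\tau')$ iff $F'=\Phi_\nu(F)$ and $\tau=\tau'\circ\Phi_\nu|_F$ for some $\nu\in\Aut(E)$. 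Taking as representatives the three subgroups $F_1,F_2,F_3$ of $\Hol(E)$ displayed above---pairwise non-conjugate, and exhausting all such classes---no equivalence relates pairs with different first component, so the total number of braces equals $n_1+n_2+n_3$, where $n_i$ is the number of $\Aut(E)$-equivalence classes of morphisms $\tau\colon F_i\to\Z_p^*$.

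First I would enumerate, for $F_i\cong\Z_4\times\Z_2$ with cyclic factors generated by an element $f_4$ of order $4$ and an element $f_2$ of order $2$, the possible kernels of $\tau$. Since the image of $\tau$ is cyclic, $F_i/\ker\tau$ is cyclic, and this leaves exactly four types: $\ker\tau=F_i$ (the direct product); $\ker\tau$ the unique Klein four-subgroup $\{1,f_4^2,f_2,f_4^2f_2\}$, with image $\{\pm1\}$; $\ker\tau$ one of the two cyclic subgroups $\langle f_4\rangle$, $\langle f_4f_2\rangle$, again with image $\{\pm1\}$; and $\ker\tau$ of order $2$, which forces $\ker\tau\in\{\langle f_2\rangle,\langle f_4^2f_2\rangle\}$ (the remaining order-$2$ subgroup $\langle f_4^2\rangle$ gives a non-cyclic quotient) and the image to be of order $4$, so that this last type occurs only when $p\equiv1\bmod4$. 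In each of the first three types the morphism is uniquely determined by its kernel; in the fourth there are two morphisms per order-$2$ kernel, distinguished by the sign of the image of $f_4$.

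Next I would read off the equivalence classes from the explicit conjugations recorded above. For every $F_i$ the two cyclic order-$4$ kernels are $\Aut(E)$-conjugate via an automorphism that also normalises $F_i$ (namely $\Phi_{M'}$ for $F_1$, $\Phi_{\tilde M}$ for $F_2$, $\Phi_{Q^2}$ for $F_3$); hence each $F_i$ contributes exactly one direct product, one brace with Klein kernel, and one brace with cyclic kernel of order $4$, giving $3+3+3=9$ braces for every admissible $p$. Since for $p\equiv3\bmod4$ the fourth type is empty, this proves part~(1). For $p\equiv1\bmod4$ it remains to count the four morphisms $\tau_{\pm,\pm}$ of the fourth type for each $F_i$: the computations above supply, for $F_1$ and $F_2$, automorphisms of $E$ stabilising $F_i$ that identify all four of them (one class each), whereas for $F_3$ the two candidate kernels $\langle(e_1,\Id)\rangle$ and $\langle(e_1+e_2,Q^2)\rangle$ are not $\Aut(E)$-conjugate, and within each of them the two sign choices are identified (by $\Phi_S$, and symmetrically for the other), giving two classes. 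Thus $1+1+2=4$ further braces appear, for a total of $9+4=13$, which is part~(2).

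The one step I expect to require genuine care is this last count: as the treatment of $F_2$ in Subsection~\ref{z4z2} already illustrates, conjugacy of the kernels of $\tau$ and $\tau'$ is strictly weaker than equivalence of the pairs $(F_i,\tau)$ and $(F_i,\tau')$, so in each case one must check that the conjugating automorphism of $E$ simultaneously normalises $F_i$ and transforms the morphism as claimed. Given the tables of elements of order $2$ and of order $4$ in $\Hol(E)$ together with the explicit matrices $M'$ and $\tilde M$ above, this verification is routine.
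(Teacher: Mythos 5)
Your proposal is correct and takes essentially the same route as the paper: it organises the count via Proposition~\ref{determsemid}, uses the same three representatives $F_1,F_2,F_3$ of regular subgroups of $\Hol(\Z_2\times\Z_2\times\Z_2)$ isomorphic to $\Z_4\times\Z_2$, sorts morphisms $\tau$ by the order of their kernel, and invokes the same conjugating automorphisms ($\Phi_{M'},\Phi_{\tilde M},\Phi_{Q^2},\Phi_S,\Phi_M,\Phi_{MS}$), yielding $3+3+3=9$ and the extra $1+1+2=4$ classes with kernel of order $2$ when $p\equiv1\bmod4$. One small caveat: for $F_3$ the two morphisms with kernel $\langle(e_1+e_2,Q^2)\rangle$ are not identified by $\Phi_S$ (which fixes each of them), but by another automorphism normalising $F_3$, for instance $\Phi_{Q^2}$, so the count of two classes from $F_3$, and hence the totals $9$ and $13$, stands.
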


\subsection{$F \simeq \Z_2\times \Z_2\times \Z_2$}\label{z2z2z2}

This case only occurs when the abelian group is $\Z_4\times \Z_2$ or $\Z_2\times \Z_2\times \Z_2$

When $E=\Z_4\times \Z_2$ in $\Hol(E)$ there are two conjugacy classes of regular subgroups isomorphic to $\Z_2\times \Z_2\times \Z_2$.
They are normal, therefore union of conjugacy classes, and they intersect in the normal subgroup of order $2$.
Working with the conjugacy classes of elements of order $2$ described in the previous subsection we find
$$
\begin{array}{rl}
     F_1&=\langle ((2,0),id)\rangle\times \langle ((1,0), r^2)\rangle\times\langle ((1,1), r^2) \rangle \\
     F_2&=\langle ((2,0),id)\rangle\times \langle ((0,1), r^2)\rangle\times\langle ((1,0), rs) \rangle \\
\end{array}
$$
and we have to count classes of morphisms $\tau^{(i)}: F_i\to \Z_p^*$ with kernel of order 4, therefore isomorphic to the Klein group. We can freely choose two elements from the nontrivial ones and in this way we obtain 7 possible kernels and each element belongs to three different subgroups.
Since $((2,0),id)$ is invariant under conjugation, kernels containing this element cannot be conjugate to kernels not containing it. Let us see if they give a single conjugacy class.

For $F_1$ the three kernels containing $((2,0),id)$ are
$$
\begin{array}{cc}
\langle ((1,0),r^2)\, ,  & ((3,0),r^2)\rangle \\
\langle((1,1),r^2)\, ,   & ((3,1),r^2)\rangle \\
\langle((0,1),id)\, , & ((2,1),id)\rangle.
\end{array}
$$
Conjugation $\Phi_r$ takes the first to the second one. But these two groups are not conjugated to the third one.
The four kernels not containing $((2,0),id)$ are
$$
\begin{array}{cc}
\langle ((1,0),r^2)\, ,  & ((0,1),id)\rangle \\
\langle((1,0),r^2)\, ,   & ((2,1),id)\rangle \\
\langle((3,0),r^2)\, , & ((2,1),id)\rangle\\
\langle((3,0),r^2)\, , & ((0,1),id)\rangle\\
\end{array}
$$
The automorphism $r^3s$ has fixed points $(1,0)$ and $(3,0)$ and exchanges $(0,1)$ and $(2,1)$, therefore
$\Phi_{r^3s}$ gives conjugacy of the first with the second and the third with the fourth one.
Analogously we see that the first and third kernels are conjugate by $\Phi_{rs}$.
All together we obtain three conjugacy classes from $F_1$.

For $F_2$ the three kernels containing $((2,0),id)$ are
$$
\begin{array}{cc}
\langle ((0,1),r^2)\, ,  & ((2,1),r^2)\rangle \\
\langle((1,0),rs)\, ,   & ((3,0),rs)\rangle \\
\langle((3,1),r^3s)\, , & ((1,1),r^3s)\rangle.
\end{array}
$$
The first one cannot be conjugate to the other two because elements of the second component are not conjugate in $D_{2\cdot 4}$. The conjugation $\Phi_{r^2s}$ takes the second to the third one.
The four kernels not containing $((2,0),id)$ are
$$
\begin{array}{cc}
\langle ((0,1),r^2)\, ,  & ((3,0),rs)\rangle \\
\langle((0,1),r^2)\, ,   & ((1,1),r^3s)\rangle \\
\langle((2,1),r^2)\, , & ((1,0),rs)\rangle\\
\langle((2,1),r^2)\, , & ((3,0),rs)\rangle\\
\end{array}
$$
We see that
$\Phi_{r^2s}$ gives conjugacy of the first and the second one, $\Phi_{r^2}$ gives conjugacy of the third and the fourth one, and $\Phi_{rs}$ gives conjugacy of the first and the third one.
All together we obtain three conjugacy classes from $F_2$.

\begin{proposition}
 Let $p=5$ or $p\ge 11$ be a prime. There are 8 left braces with additive group
     $\Z_p\times \Z_4\times \Z_2$ and multiplicative group $\Z_p\rtimes (\Z_2\times \Z_2\times \Z_2)$. Two of them are direct products and the remaining 6 have kernel isomorphic to the Klein group.
\end{proposition}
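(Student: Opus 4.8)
The plan is to read off the count from Propositions~\ref{braceprod} and~\ref{determsemid}, using the two conjugacy classes $F_1,F_2$ of regular subgroups of $\Hol(\Z_4\times\Z_2)$ isomorphic to $\Z_2\times\Z_2\times\Z_2$ that were produced above. By Proposition~\ref{braceprod} every brace in question is a direct or semidirect product of the trivial brace of size $p$ with a brace of size $8$, and by Proposition~\ref{determsemid} its conjugacy class corresponds to a pair $(F,\tau)$ with $F\in\{F_1,F_2\}$ and $\tau\colon F\to\Aut(\Z_p)=\Z_p^*$ a group homomorphism, two pairs $(F,\tau)$, $(F,\tau')$ over the \emph{same} $F$ being equivalent precisely when $\tau=\tau'\circ\Phi_\nu|_F$ for some $\nu\in\Aut(E)$. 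Since $F_1$ and $F_2$ are normal in $\Hol(E)$ (hence stable under every $\Phi_\nu$) and lie in different $\Hol(E)$-conjugacy classes, no $\Phi_\nu$ links a pair over $F_1$ with a pair over $F_2$, so the two groups may be treated independently.

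First I would determine which homomorphisms $\tau$ can occur. As $F_i\simeq\Z_2\times\Z_2\times\Z_2$ has exponent $2$ and $\Z_p^*$ is cyclic of even order $p-1$, the image $\tau(F_i)$ is either trivial or the unique subgroup $\{\pm1\}$ of order $2$; consequently the only possibilities for $\ker\tau$ are $F_i$ itself and the index-$2$ subgroups of $F_i$, which are exactly the seven two-dimensional $\F_2$-subspaces (all Klein four-groups). In particular a kernel of order $2$ is impossible, since it would force $\Z_2\times\Z_2\hookrightarrow\Z_p^*$, and this is why — unlike the earlier subsections — no congruence condition on $p$ appears. The trivial $\tau$ yields exactly one brace for each of $F_1,F_2$, namely the $2$ direct products already recorded in Proposition~\ref{direct}.

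Next, for the proper semidirect products: a homomorphism $F_i\to\Z_p^*$ with image $\{\pm1\}$ is determined by its kernel, so the braces with Klein kernel attached to $F_i$ are in bijection with the $\Phi_\nu$-orbits (for $\nu\in\Aut(E)$) on the seven two-dimensional subspaces of $F_i$. I would invoke two facts established in the discussion above: the subspace $\langle((2,0),\mathrm{id})\rangle$ is fixed by every $\Phi_\nu$, so the seven subspaces split $\Phi$-equivariantly into the three through $((2,0),\mathrm{id})$ and the four avoiding it; and $\Phi_\nu$ permutes the ``rotation parts'' of elements within their $D_{2\cdot4}$-conjugacy classes. These are exactly the orbit computations carried out for $F_1$ and $F_2$ just before the statement: for each $F_i$ the three subspaces through $((2,0),\mathrm{id})$ fall into two orbits — a length-$2$ orbit joined by a single $\Phi_\nu$, and a singleton that cannot merge with it because the rotation parts of its nontrivial elements are not $D_{2\cdot4}$-conjugate to those of the others — while the four remaining subspaces form one orbit. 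Hence $F_1$ contributes $3$ and $F_2$ contributes $3$ conjugacy classes with Klein kernel.

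Adding $2+6=8$ yields the proposition, uniformly in $p$. The only genuinely delicate points are the two orbit computations cited in the previous paragraph, and inside them the non-conjugacy claims; the latter are precisely what make the ``rotation part is a $\Phi_\nu$-invariant'' observation essential, separating $\mathrm{id}$ from $r^2$ for $F_1$ and $r^2$ from a reflection for $F_2$. Both computations have been completed before the statement, so the remaining task is the bookkeeping described here.
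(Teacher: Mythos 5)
Your proposal is correct and follows essentially the same route as the paper: reduce via Propositions \ref{braceprod} and \ref{determsemid} to pairs $(F_i,\tau)$ over the two normal regular subgroups $F_1,F_2\simeq\Z_2\times\Z_2\times\Z_2$ of $\Hol(\Z_4\times\Z_2)$, observe that only the trivial kernel or an index-two (Klein) kernel can occur, and count $\Aut(E)$-orbits on the seven index-two subgroups of each $F_i$. Your orbit analysis — the $3+4$ split by the invariant element $((2,0),\mathrm{id})$, a $2{+}1$ orbit pattern among the three subspaces through it (non-conjugacy detected by the $D_{2\cdot 4}$-conjugacy class of the second components), and a single orbit on the remaining four — matches the paper's computations exactly, giving $3+3$ Klein-kernel classes plus the $2$ direct products.
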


When $E=\Z_2\times \Z_2\times \Z_2$ in $\Hol(E)\simeq \F_2^3\times \GL(3,2)$ there are also
two conjugacy classes of regular subgroups isomorphic to $\Z_2\times \Z_2\times \Z_2$.

One of them has length 1 and comes from the conjugacy class of elements of order $2$ with identity matrix in the second component, namely from the natural embedding of $\Z_2\times \Z_2\times \Z_2$ in its holomorph:
$$
F_1=\{(v,Id): v\in \F_2^3\}
$$
In order to generate a second one we need elements $(u, Id)$, $(v, S)$, $(w, A)$ such that
$$
u+v=v+Su,\quad u+w=w+Au,\quad v+Sw=w+Av,\quad A^2=\Id, \quad AS=SA
$$
with $v\in\Ker(S+Id)\setminus Im(S+Id)$ and $w\in\Ker(A+Id)\setminus Im(A+Id)$. Therefore, $u\ne 0$ is a common eigenvector of $S$ and $A$. If $A=\Id$, we should have $4$ different elements of order $2$ with $S$ in the second component, but there are only $2$.
Therefore $A$ should be in the centralizer of $S$ and have a common eigenvector $u\ne 0$ with $S$, that is, $u\in \langle e_1,e_2\rangle $. Finally, the condition $v+Sw=w+Av$ implies that $Im(S+Id)=Im(A+Id)$.

Let us take $A=M$, as in previous subsection, and $u=e_2$. Then, $v=e_1$ is a valid eigenvector of $S$ and $w=e_1+e_3$ is a valid eigenvector of $M$. We have $v+Sw=e_1+e_1+e_2+e_3=e_2+e_3$ and $w+Mv=e_1+e_3+e_1+e_2=e_2+e_3$. Therefore, we have the second conjugacy class of regular elementary subgroups of $\Hol(E)$:
$$\begin{array}{rcl}
F_2&=&\langle (e_2, Id)\rangle \times \langle (e_1, S)\rangle \times\langle (e_1+e_3, M)\rangle =\\
&=&\{(0,Id),(e_2, Id),(e_1, S),(e_1+e_3, M),\\
&&\ (e_1+e_2, S),(e_1+e_2+e_3, M),(e_2+e_3, SM),(e_3, SM)\}.
\end{array}
$$

Again, there are 7 possible kernels of order $4$ for every $F_i$. For $F_1$, the first components form a 2-dimensional vector subspace of $\F_2^3$ and $\GL(3,2)$ acts transitively on this set of subspaces. Therefore, any two of them are conjugated by some $\Phi_D$, with $D\in\GL(3,2)$. All these conjugations $\Phi_D$ leave $F_1$ invariant and this subgroup provides a unique conjugacy class of semidirect products.

Let us analyze the classes of Klein subgroups of $F_2$. Three of them contain the element $(e_2, Id)$, which has to be invariant under any conjugation $\Phi_D:F_2\to F_2$. Therefore, they cannot be conjugated to any of the other four subgroups. Let us see that they form a conjugacy class. These kernels are
$$
\begin{array}{rcl}
K_1&=&\langle (e_2,Id)\, ,   (e_1,S)\rangle \\
K_2&=&\langle (e_2,Id)\, ,    (e_1+e_3,M)\rangle \\
K_3&=&\langle (e_2,Id)\, ,   (e_2+e_3,SM)\rangle.
\end{array}
$$
Keeping the above notation, $\Phi_{\tilde M}$ leaves $F_2$ invariant and $\Phi_{\tilde M}(K_1)=K_3$.
Taking
$$\tilde{\tilde M}=\begin{bmatrix}
1&0&0\\
0&1&0\\
1&0&1\\
    \end{bmatrix}
$$
we have that $\Phi_{\tilde{\tilde M}}$ leaves $F_2$ invariant and $\Phi_{\tilde{\tilde M}}(K_1)=K_2$.

The remaining kernels are
$$
\begin{array}{rcl}
K_4&=&\langle (e_1,S)\, ,   (e_1+e_3,M)\rangle \\
K_5&=&\langle (e_1,S)\, ,   (e_1+e_2+e_3,M)\rangle \\
K_6&=&\langle (e_1+e_2,S)\, ,   (e_1+e_3,M)\rangle\\
K_7&=&\langle (e_1+e_2,S)\, ,   (e_1+e_2+e_3,M)\rangle.
\end{array}
$$
$MS\in\Cent_{\GL(3,2)}(M)\cap \Cent_{\GL(3,2)}(S)$ gives $\Phi_{MS}(K_4)=K_7$ and $\Phi_{MS}(K_5)=K_6$.
On the other hand, $\Phi_{S}(K_4)=K_5$. 

\begin{proposition}
 Let $p=5$ or $p\ge 11$ be a prime. There are 5 left braces with additive group
     $\Z_p\times \Z_2\times \Z_2\times \Z_2$ and multiplicative group $\Z_p\rtimes (\Z_2\times \Z_2\times \Z_2)$. Two of them are direct products and the other 3 have kernel isomorphic to the Klein group.
\end{proposition}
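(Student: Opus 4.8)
The plan is to read off the answer from Proposition~\ref{determsemid} applied to $E=\Z_2\times\Z_2\times\Z_2$, combined with the explicit analysis of the two subgroups $F_1,F_2$ carried out just above the statement. By that proposition, the conjugacy classes of regular subgroups of $\Hol(N)$ whose $2$-part is isomorphic to $\Z_2\times\Z_2\times\Z_2$ correspond bijectively to pairs $(F,\tau)$, where $F$ ranges over the two conjugacy classes $F_1,F_2$ of regular elementary subgroups of $\Hol(E)$ described above, and $\tau\colon F\to\Aut(\Z_p)$ ranges over orbits of group morphisms under the equivalence $\tau\simeq\tau'\iff\tau=\tau'\circ\Phi_\nu|_{F}$ for some $\nu\in\Aut(E)$ with $\Phi_\nu(F)=F$. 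So the count splits into: which $\tau$ can occur, and how many orbits they form on each $F_i$.

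First I would pin down the kernels. Since each $F_i\cong\Z_2^3$ is elementary abelian while $\Aut(\Z_p)\cong\Z_p^*$ is cyclic, the image $\tau(F_i)$ is an elementary abelian $2$-subgroup of a cyclic group, hence has order $1$ or $2$; thus $\ker\tau$ has order $8$ or $4$, and in the latter case it is a $2$-dimensional $\F_2$-subspace of $F_i$, so isomorphic to the Klein group. The trivial morphism contributes, for each of $F_1$ and $F_2$, exactly one class, a direct product — two braces in all, consistent with the count of regular subgroups of $\Hol(\Z_2^3)$ of type $\Z_2\times\Z_2\times\Z_2$ in Section~\ref{direct}.

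It remains to count the classes with $|\ker\tau|=4$. As $\Z_p^*$ has a unique subgroup of order $2$, such a $\tau$ is determined by its kernel, so the classes correspond to the orbits of $\{\nu\in\Aut(E):\Phi_\nu(F_i)=F_i\}$ on the $7$ index-$2$ subgroups (hyperplanes) of $F_i$. For $F_1=\{(v,\Id):v\in\F_2^3\}$, which is the length-$1$ class and hence normal in $\Hol(E)$, every $\nu\in\Aut(E)=\GL(3,2)$ normalizes $F_1$ and acts on its hyperplanes just as $\GL(3,2)$ acts on the hyperplanes of $\F_2^3$, which is a transitive action (it is already transitive on nonzero vectors, hence on hyperplanes by duality); so $F_1$ gives a single class. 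For $F_2$, the element $(e_2,\Id)$ is the only nontrivial element of $F_2$ with identity second component, hence is fixed by every $\Phi_\nu$ preserving $F_2$; therefore the $3$ hyperplanes of $F_2$ through $(e_2,\Id)$ cannot be mapped to the $4$ not through it, and the $3$-plus-$4$ split is genuine. Within the first set, $\Phi_{\tilde M}(K_1)=K_3$ and $\Phi_{\tilde{\tilde M}}(K_1)=K_2$ show $K_1,K_2,K_3$ form one orbit; within the second, $\Phi_{MS}$ sends $K_4\mapsto K_7$, $K_5\mapsto K_6$ and $\Phi_S$ sends $K_4\mapsto K_5$, so $K_4,\dots,K_7$ form one orbit. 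Hence $F_2$ contributes exactly two classes with Klein kernel.

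Summing up, $F_1$ yields $1+1=2$ classes and $F_2$ yields $1+2=3$ classes, a total of $5$, of which $2$ are direct products and $3$ have kernel isomorphic to the Klein group; in every case the additive group is $N=\Z_p\times\Z_2\times\Z_2\times\Z_2$ and the multiplicative group is $\Z_p\rtimes_\tau F\cong\Z_p\rtimes(\Z_2\times\Z_2\times\Z_2)$. The only genuine work is the orbit bookkeeping for $F_2$ — checking that $\tilde M,\tilde{\tilde M},MS,S\in\GL(3,2)$ do normalize $F_2$ and induce the stated identifications among $K_1,\dots,K_7$ — and this is exactly what the paragraphs preceding the statement establish, so the proof is their assembly via Proposition~\ref{determsemid}.
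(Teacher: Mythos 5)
Your proof is correct and follows essentially the same route as the paper: it invokes Proposition~\ref{determsemid}, notes that a nontrivial $\tau$ on an elementary abelian $F$ must have Klein kernel and is determined by that kernel, and then counts orbits of hyperplanes of $F_1$ and $F_2$ under the $\Phi_\nu$ preserving each group, using the same conjugating elements $\tilde M$, $\tilde{\tilde M}$, $MS$, $S$ and the invariance of $(e_2,\Id)$ that the paper uses. The only additions (duality for transitivity on hyperplanes, the explicit remark that $\Z_p^*$ has a unique subgroup of order $2$) are harmless elaborations of steps the paper leaves implicit.
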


\subsection{$F \simeq D_{2\cdot 4}$}\label{d24}

Let us determine dihedral subgroups of the different holomorphs. Let us observe that in this case we never have kernels of order $2$ since the unique normal subgroup of order $2$ is generated by the square of an element of order $4$. Therefore, we should consider cyclic kernels and Klein kernels, and then conjugacy of kernels by automorphisms is the unique condition  we need to classify semidirect products. Since a group $F\simeq D_{2\cdot 4}$ has a unique cyclic subgroup of order $4$, for every possible $F$ there is just one semidirect product with cyclic kernel.

When $E=\Z_8$ in $\Hol(E)$ we have just one regular dihedral subgroup, which is normal and therefore union of conjugacy classes. Since there is just one conjugacy class of elements of order 2 acting with trivial stabilizers and, as we have seen, its elements commute with $(2,5)$, we have to take the other conjugacy class of order $4$ and length $2$.
We check
$$
(1,7)(2,1)(1,7)=(6,1)=(2,1)^3
$$
so that
$
F=\langle (2,1), (1,7)  \rangle.
$
We have two Klein kernels,$
 \langle (1,7), (5,7) \rangle$ and
$\langle (3,7), (7,7) \rangle,     $
which are conjugate by $\Phi_3$.

\begin{proposition}
 Let $p=5$ or $p\ge 11$ be a prime. There are $3$ left braces with additive group
     $\Z_p\times \Z_8$ and multiplicative group $\Z_p\rtimes D_{2\cdot 4}$.
     One is a direct product, another one has cyclic kernel of order 4 and the third one has kernel isomorphic to the Klein group.
\end{proposition}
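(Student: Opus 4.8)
The plan is to apply Proposition~\ref{determsemid} to $N=\Z_p\times E$ with $E=\Z_8$. First I would recall from the table for $\Hol(\Z_8)$ in Section~\ref{direct} that $\Hol(\Z_8)=\Z_8\rtimes V_4$ has exactly one conjugacy class of regular subgroups isomorphic to $D_{2\cdot 4}$, with representative the normal subgroup $F=\langle (2,1),(1,7)\rangle$ identified just above the statement; in particular every brace of the proposition has additive group $\Z_p\times\Z_8$. Since $F$ is normal in $\Hol(\Z_8)$, every inner automorphism $\Phi_\nu$ with $\nu\in\Aut(E)=\Z_8^*$ preserves $F$, so Proposition~\ref{determsemid} reduces the problem to counting the orbits of the group homomorphisms $\tau\colon F\to\Aut(\Z_p)=\Z_p^*$ under the action $\tau\mapsto\tau\circ\Phi_\nu|_F$, $\nu\in\Z_8^*$.

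Second I would enumerate these homomorphisms. As $\Z_p^*$ is cyclic and $D_{2\cdot 4}^{\mathrm{ab}}\simeq\Z_2\times\Z_2$, each $\tau$ factors through the abelianization, so its image lies in the unique order-$2$ subgroup $\{\pm 1\}$ of $\Z_p^*$ (this is where the hypothesis that $p$ is odd is used, and it is also why, unlike the cyclic case, no congruence condition on $p$ appears) and $\ker\tau\supseteq [F,F]=\langle r^2\rangle$. Hence $\ker\tau$ is either $F$ itself or one of the three index-$2$ subgroups of $F$, namely the cyclic $\langle r\rangle\simeq\Z_4$ and the two Klein subgroups; each index-$2$ subgroup is the kernel of exactly one surjection $F\to\{\pm 1\}$, so there are precisely four homomorphisms $\tau$, and $\tau\sim\tau'$ holds if and only if $\ker\tau$ and $\ker\tau'$ are $\Z_8^*$-conjugate.

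Third I would count orbits. The trivial $\tau$ is alone in its orbit (the direct product, kernel of order $8$). The subgroup $\langle r\rangle$ is the unique subgroup of $F$ isomorphic to $\Z_4$, hence characteristic, so its homomorphism is fixed by every $\Phi_\nu$ and forms a second orbit (cyclic kernel of order $4$). The two Klein kernels $\langle (1,7),(5,7)\rangle$ and $\langle (3,7),(7,7)\rangle$ are conjugate by $\Phi_3$, as computed before the statement, so they give a single orbit (kernel isomorphic to the Klein group). That produces $3$ orbits, hence $3$ conjugacy classes of regular subgroups of $\Hol(N)$ with multiplicative group $\Z_p\rtimes D_{2\cdot 4}$, and, by Bachiller's correspondence \cite{Bachiller} together with \cite[Lemma 2.1]{BNY}, $3$ non-isomorphic left braces with the stated kernels.

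The structural input --- that $\Hol(\Z_8)$ has a single regular dihedral subgroup, the identification of its $\Z_4$ and Klein subgroups, and the conjugacy of the two Klein kernels by $\Phi_3$ --- is the substantive part and is already carried out above; what remains to be checked carefully in the write-up is only that conjugacy of kernels is equivalent to conjugacy of homomorphisms here (true because an index-$2$ subgroup pins down the surjection onto $\{\pm 1\}$) and that the three orbits are genuinely distinct, which holds because conjugation in $\Hol(N)$ preserves both the isomorphism type of $\ker\tau$ and its order.
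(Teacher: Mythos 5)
Your proposal is correct and follows essentially the same route as the paper: reduce via Proposition~\ref{determsemid} to the unique regular dihedral subgroup $F=\langle (2,1),(1,7)\rangle$ of $\Hol(\Z_8)$, classify the homomorphisms $\tau\colon F\to\Z_p^*$ by their kernels (no kernel of order $1$ or $2$ since $\tau$ factors through $F^{\mathrm{ab}}\simeq\Z_2\times\Z_2$ into the cyclic group $\Z_p^*$), and observe that the cyclic $\Z_4$ kernel is characteristic while the two Klein kernels are identified by $\Phi_3$, giving the three classes. Your abelianization argument is just a slightly more explicit version of the paper's remark that the unique normal subgroup of order $2$ is the square of an element of order $4$, so there is nothing essentially different to report.
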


Next we consider $E=\Z_4\times \Z_2$ in whose holomorph we have 5 conjugacy classes of regular subgroups isomorphic to  $D_{2\cdot 4}$. We start with the five conjugacy classes of cyclic subgroups of order 4 obtained in subsection \ref{z4z2}.
$$
\langle( (1,0),\ r)\rangle,\
\langle( (1,0),\ id)\rangle,\
\langle( (1,0),\ s)\rangle,\
\langle( (0,1),\ rs)\rangle,\
\langle( (1,1),\ rs)\rangle.
$$
Then, for each of the subgroups $\langle f_4\rangle$ we have to consider
elements $f_2$ of order $2$ such that $f_2f_4f_2=f_4^{-1}$.
We find
$$
\begin{array}{l}
((2,0),s) ((1,0), r)((2,0),s)=((3,1),r^3s)((2,0),s)=((1,1),r^3)=((1,0),r)^3    \\[2ex]

((0,1),r^2) ((1,0), id)((0,1),r^2)=  ((3,1), r^2)((0,1),r^2) =((3,0),id)=((1,0),id)^3  \\[2ex]

((1,1),r^2) ((1,0), s)((1,1),r^2)=  ((0,1), r^2s)((1,1),r^2) =((3,1),s)=((1,0),s)^3  \\[2ex]

((1,0),r^2) ((0,1), rs)((1,0),r^2)=  ((1,1), r^3s)((1,0),r^2) =((2,1),rs)=((0,1),rs)^3  \\[2ex]

((2,1),id) ((1,1), rs)((2,1),id)=  ((3,0), rs)((2,1),id) =((3,1),rs)=((1,1),rs)^3  \\[2ex]
\end{array}
$$

Each of the corresponding regular dihedral groups $F_i$ provides two possible Klein kernels: $\langle f_4^2, f_2\rangle$ and $\langle f_4^2, f_4f_2\rangle$.

\begin{center}
\begin{tabular}{r|l|l|l}
& $K_1$ & $K_2$ & \\
\hline
   $F_1$ & $((2,1),r^2)\ ((2,0),s)$   &$((2,1),r^2)\ ((3,0),rs) $& Not conjugate  \\
  $F_2$ & $((2,0),id)\ ((0,1),r^2)$   &$((2,0),id)\ ((1,1),r^2) $& Not conjugate  \\
  $F_3$ & $((2,1),id)\ ((1,1),r^2)$   &$((2,1),id)\ ((2,0),r^2s) $& Not conjugate  \\
  $F_4$ & $((2,0),id)\ ((1,0),r^2)$   &$((2,0),id)\ ((3,1),r^3s) $& Not conjugate \\
  $F_5$ & $((2,0),id)\ ((2,1,id)$   &$((2,0),id)\ ((1,0),rs) $& Not conjugate \\
\end{tabular}
\end{center}
Therefore, every $F_i$ provides two non-conjugate semidirect products.

\begin{proposition}
 Let $p=5$ or $p\ge 11$ be a prime. There are $20$ left braces with additive group
     $\Z_p\times \Z_4\times\Z_2$ and multiplicative group $\Z_p\rtimes D_{2\cdot 4}$.
    Five are direct products, five have cyclic kernel of order 4 and the remaining ten have kernel isomorphic to the Klein group.
\end{proposition}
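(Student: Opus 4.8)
The plan is to apply Proposition~\ref{determsemid}: the braces with additive group $\Z_p\times\Z_4\times\Z_2$ and multiplicative group $\Z_p\rtimes D_{2\cdot 4}$ correspond bijectively to pairs $(F,\tau)$, where $F$ runs over the five conjugacy classes of regular subgroups of $\Hol(\Z_4\times\Z_2)$ isomorphic to $D_{2\cdot 4}$ exhibited just above (the five cyclic subgroups $\langle f_4\rangle$ of order $4$, each completed to a dihedral group by the involution $f_2$ found above), and $\tau$ runs over the $\Aut(E)$-conjugacy classes of group morphisms $\tau\colon F\to\Z_p^*$.

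First I would classify the morphisms $\tau\colon D_{2\cdot 4}\to\Z_p^*$. Since $\Z_p^*$ is cyclic, $F/\ker\tau$ embeds in it and is therefore cyclic; as the abelianization of $D_{2\cdot 4}$ is $\Z_2\times\Z_2$, this forces $|F/\ker\tau|\in\{1,2\}$. Thus $\ker\tau$ is either $F$ itself (the trivial morphism, giving a direct product) or one of the three subgroups of index $2$: the cyclic subgroup $\langle f_4\rangle$, or one of the two Klein subgroups $\langle f_4^2,f_2\rangle$ and $\langle f_4^2,f_4f_2\rangle$. In particular there is no kernel of order $2$ (as already noted for this type), and a kernel of order $4$ determines $\tau$ uniquely, so counting braces amounts to counting $\Aut(E)$-conjugacy classes of such kernels.

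Then I would count group by group. For each $F_i$ the trivial morphism yields the direct product already accounted for in Proposition~\ref{direct}, hence five such braces. The subgroup $\langle f_4\rangle$ is the unique cyclic subgroup of order $4$ of $F_i\simeq D_{2\cdot 4}$, hence characteristic, so every $\nu\in\Aut(E)$ with $\Phi_\nu(F_i)=F_i$ fixes it; thus each $F_i$ gives exactly one brace with cyclic kernel of order $4$, five in all. The remaining task is to show that for each $F_i$ the two Klein kernels $\langle f_4^2,f_2\rangle$ and $\langle f_4^2,f_4f_2\rangle$ are \emph{not} $\Aut(E)$-conjugate; granting this, each $F_i$ contributes two more braces, ten in all, and we obtain $5+5+10=20$ with the claimed distribution among the kernel types. (That braces attached to different $F_i$ never coincide, and that the four options attached to a fixed $F_i$ are pairwise inequivalent except possibly for the two Klein ones, is immediate, since $\Aut(E)$-conjugation preserves the $\Hol(E)$-conjugacy class of $F$ and the isomorphism type and order of the kernel.)

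The main obstacle is exactly this non-conjugacy of the two Klein kernels for each $F_i$. Since $\Aut(E)=D_{2\cdot 4}$ is non-abelian, one cannot argue by comparing second components alone as in the $E=\Z_8$ cases. Instead, note that $f_4^2$ generates the centre of $F_i$, so any $\nu$ with $\Phi_\nu(F_i)=F_i$ fixes $f_4^2$, and a conjugation carrying $\langle f_4^2,f_2\rangle$ onto $\langle f_4^2,f_4f_2\rangle$ would have to send the pair of non-central involutions $\{f_2,f_4^2f_2\}$ to $\{f_4f_2,f_4^3f_2\}$. For each of the five pairs $(f_4,f_2)$ I would write these four elements of $\Hol(E)=(\Z_4\times\Z_2)\rtimes D_{2\cdot 4}$ explicitly, as in the table above, and use that conjugation by $\nu\in\Aut(E)$ acts by $(a,\sigma)\mapsto(\nu(a),\nu\sigma\nu^{-1})$, hence preserves both the multiset of $D_{2\cdot 4}$-conjugacy classes of the second components and the multiset of orders in $\Z_4\times\Z_2$ of the first components. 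In each case one of these invariants separates the two Klein subgroups: for four of the $F_i$ the reflection classes of the second components differ (the extra involutions of one kernel lie in the conjugacy class of $s$ or have trivial $\sigma$-part, while those of the other lie in the class of $rs$), and for the remaining $F_i$, where the second components agree up to conjugacy, one kernel has all first components of order at most $2$ while the other has two first components of order $4$. Thus the two kernels are never $\Aut(E)$-conjugate, each $F_i$ contributes two braces with Klein kernel, and the total is $20$ as claimed.
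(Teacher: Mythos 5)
Your proposal is correct and follows essentially the same route as the paper: the five regular dihedral subgroups $F_i$ of $\Hol(\Z_4\times\Z_2)$ built from the five conjugacy classes of cyclic order-$4$ subgroups, the kernel classification (no kernel of order $2$, a unique cyclic kernel per $F_i$ since $\langle f_4\rangle$ is characteristic, two Klein kernels), and the non-conjugacy of the two Klein kernels for each $F_i$, which the paper merely records as ``Not conjugate'' in its table while you justify it via conjugation invariants (multisets of $\Aut(E)$-conjugacy classes of second components and of orders of first components). Only your parenthetical naming of the separating classes is slightly off — for $F_3$ and $F_4$ the extra involutions of one kernel have central second component $r^2$ (neither trivial nor in the class of $s$) — but the multisets of classes still differ in those cases, so the invariant argument and the count $5+5+10=20$ go through unchanged.
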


In $\Hol(\Z_2\times\Z_2\times\Z_2)$ there are two conjugacy classes of regular subgroups isomorphic to $D_{2\cdot 4}$.  A representative for one of the conjugacy classes of elements of order $4$ is $(e_3,S)$.
Its square  is $(e_2,Id)$ and its cube $(e_2+e_3,S)$.
If we consider an element of order $2$ of the form $(u,Id)$
$$
(e_3,S)(u,Id)=(u,Id)(e_2+e_3,S)\iff e_3+Su=u+e_2+e_3\iff Su=u+e_2
$$
We take $u=e_1+e_2+e_3$ and the dihedral group is
$$
F_1=\{(0,Id), (e_3,S),(e_2,Id), (e_2+e_3,S), (e_1+e_2+e_3,Id), (e_1,S), (e_1+e_3,Id), (e_1+e_2,S)\}
$$
The two Klein kernels are $\langle(e_2,Id),(e_1+e_2+e_3,Id))\rangle$ and $\langle(e_2,Id),(e_1,S)\rangle$ and they are not conjugate.

Since $SQ=Q^3S$ we can consider an element of order $2$ with matrix $S$.
In this way we obtain
$$
(e_1+e_2,S)(e_3,Q)=(e_1+e_3,SQ)\quad (e_1+e_2+e_3,Q^3)(e_1+e_2,S)=(e_1+e_3,Q^3S)
$$
and the second regular group
$$
\begin{array}{lll}
F_2&=&\langle (e_3,Q),(e_1+e_2,S)\rangle\\
&=&\{(0,Id), (e_3,Q),(e_2,Q^2), (e_1+e_2+e_3,Q^3),\\
&&(e_1+e_2,S),(e_1+e_3,SQ) ,(e_1,SQ^2), (e_2+e_3,SQ^3)\}.\end{array}
$$
The two Klein kernels are
$K_1=\langle(e_2,Q^2),(e_1+e_2,S)\rangle$
and $K_2=\langle(e_2,Q^2),(e_1+e_3,SQ)\rangle$. They are not conjugate since the vectors in the elements of $K_1$ form the subspace $\langle e_1, e_2\rangle$ but the vectors in the second one do not form a subspace, therefore we cannot have a matrix carrying the first set of vectors into the second one.

\begin{proposition}
 Let $p=5$ or $p\ge 11$ be a prime. There are $8$ left braces with additive group
     $\Z_p\times \Z_2\times\Z_2\times\Z_2$ and multiplicative group $\Z_p\rtimes D_{2\cdot 4}$.
    Two are direct products, two have cyclic kernel of order 4 and the  remaining 4 have kernel isomorphic to the Klein group.
\end{proposition}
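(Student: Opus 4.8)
The plan is to apply Proposition~\ref{determsemid} to the two $\Hol(E)$-conjugacy classes of regular subgroups $F_1,F_2\simeq D_{2\cdot4}$ of $\Hol(\Z_2\times\Z_2\times\Z_2)$ just exhibited, and to enumerate the admissible morphisms $\tau\colon F_i\to\Aut(\Z_p)=\Z_p^*$ up to $\Aut(E)$-conjugacy. First I would pin down which subgroups of $D_{2\cdot4}$ can be a kernel: as recalled at the start of this subsection, $\ker\tau$ cannot have order $2$, since $D_{2\cdot4}/[D_{2\cdot4},D_{2\cdot4}]\cong\Z_2\times\Z_2$ is not cyclic while the image $\tau(F_i)\le\Z_p^*$ is; hence $\ker\tau$ is either all of $F_i$ (trivial $\tau$, the direct product) or a subgroup of index $2$. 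Moreover, for odd $p$ the group $\Z_p^*$ has a unique element of order $2$, so each index-$2$ subgroup $K\le F_i$ is $\ker\tau$ for exactly one $\tau$, the one sending $F_i\setminus K$ to $-1$. Thus, by Proposition~\ref{determsemid}, the conjugacy classes of regular subgroups of $\Hol(N)$ lying over $F_i$ are in bijection with the $\Aut(E)$-orbits on the set consisting of $F_i$ together with the index-$2$ subgroups of $F_i$.

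Next I would count these orbits. $D_{2\cdot4}$ has exactly three subgroups of index $2$: its unique cyclic subgroup of order $4$ and its two Klein four-subgroups. The orbit $\{F_i\}$ (direct product) and the orbit of the cyclic subgroup are singletons of distinct kernel type, each contributing one brace; this uses that $D_{2\cdot4}$ has a single cyclic subgroup of order $4$. For the two Klein four-subgroups of $F_1$ I would invoke the non-conjugacy computed above, namely that $\langle(e_2,Id),(e_1+e_2+e_3,Id)\rangle$ and $\langle(e_2,Id),(e_1,S)\rangle$ are not $\Aut(E)$-conjugate, and likewise for $F_2$, where $K_1=\langle(e_2,Q^2),(e_1+e_2,S)\rangle$ and $K_2=\langle(e_2,Q^2),(e_1+e_3,SQ)\rangle$ are not conjugate since the vector parts of $K_1$ span $\langle e_1,e_2\rangle$ whereas those of $K_2$ do not form a subspace. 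Hence each of $F_1,F_2$ yields $1+1+2=4$ braces: one direct product, one with cyclic kernel of order $4$, and two with Klein kernel. Because $F_1$ and $F_2$ are not $\Hol(E)$-conjugate, Proposition~\ref{determsemid} forbids any pair $(F_1,\tau)$ from being equivalent to a pair $(F_2,\tau')$, so the two contributions simply add, giving $8$ braces with the stated distribution $2+2+4$; in every case the additive group is $\Z_p\times E=\Z_p\times\Z_2\times\Z_2\times\Z_2$ and the multiplicative group is $\Z_p\rtimes_\tau F_i\cong\Z_p\rtimes D_{2\cdot4}$.

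The only delicate point, and the step I would be careful not to skip, is the non-conjugacy of the two Klein kernels inside each $F_i$ under $\Aut(E)=\GL(3,2)$; everything else is bookkeeping of orbits forced by Proposition~\ref{determsemid}. For $F_2$ this is clinched by the linear-algebraic invariant noted above, whether the vector parts of a Klein subgroup form a $2$-dimensional subspace, and for $F_1$ by the analogous explicit check already carried out, so no genuinely new computation is needed beyond what precedes the statement.
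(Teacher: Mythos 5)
Your proposal is correct and follows essentially the same route as the paper: it invokes Proposition~\ref{determsemid}, rules out kernels of order $2$ (your abelianization argument is equivalent to the paper's observation about the unique normal subgroup of order $2$), uses the uniqueness of the cyclic subgroup of order $4$ in $D_{2\cdot 4}$, and relies on the non-conjugacy of the two Klein kernels in each of $F_1$ and $F_2$ to get $4$ classes per conjugacy class of regular subgroups, hence $8$ in total with the stated $2+2+4$ distribution. The only cosmetic caveat is that the orbits should be taken under the automorphisms $\nu\in\Aut(E)$ whose conjugation preserves $F_i$, not all of $\Aut(E)$; since you only assert non-conjugacy (under the larger group, which is stronger) and the cyclic kernel is unique, this does not affect the count.
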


%







\subsection{$F \simeq Q_8$}\label{q8}

Let us determine quaternion subgroups of the different holomorphs. Let us observe that in this case we never have kernels of order $2$ since the unique normal subgroup of order $2$ is generated by the square of an element of order $4$.
On the other hand, we neither have Klein kernels since in a quaternion group the three different subgroups of order 4 are cyclic. Its conjugacy by automorphisms is the unique condition  we need to classify semidirect products. We denote as usual $i,j$ for two order 4 elements generating $Q_8$.

When $E=\Z_8$ in $\Hol(E)$ we have just a regular quaternion subgroup, which is normal, therefore union of conjugacy classes. It is
$$
F=\langle i=(2,1), j=(1,3) \ \rangle.
$$
We have possible cyclic kernels
$$
 \langle (2,1)\rangle,\
 \langle (1,3)\rangle\
\mbox{ and} \
\langle (7,3) \rangle.
$$
The first one cannot be conjugate to the other ones because the elements do not have the same second component.  The second and third ones are conjugate by $\Phi_7$.

\begin{proposition}
 Let $p=5$ or $p\ge 11$ be a prime. There are $3$ left braces with additive group
     $\Z_p\times \Z_8$ and multiplicative group $\Z_p\rtimes Q_8$.
     One is a direct product and the other two have cyclic kernel of order 4.
\end{proposition}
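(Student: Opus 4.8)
The plan is to apply Proposition~\ref{determsemid} with $E=\Z_8$ and $F$ the unique regular quaternion subgroup of $\Hol(\Z_8)$ exhibited above, $F=\langle i=(2,1),\, j=(1,3)\rangle$. Since this $F$ is normal in $\Hol(E)$ (as noted before the statement), every inner automorphism $\Phi_\nu$ with $\nu\in\Aut(E)=\Z_8^*$ restricts to an automorphism of $F$; hence the conjugacy classes of pairs $(F,\tau)$ are controlled entirely by the $\Aut(E)$-orbits on the set of group homomorphisms $\tau\colon F\to\Aut(\Z_p)=\Z_p^*$, equivalently on the set of their kernels together with the restriction of $\tau$.

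First I would classify the homomorphisms $\tau\colon Q_8\to\Z_p^*$. Because $\Z_p^*$ is cyclic, $\operatorname{im}\tau$ is abelian, so $\Ker\tau$ contains $[Q_8,Q_8]=\{\pm1\}$, the unique subgroup of order $2$; and since $p$ is odd the $2$-torsion of $\Z_p^*$ has order at most $2$, so the induced map $Q_8/\{\pm1\}\cong\Z_2\times\Z_2\to\Z_p^*$ has image of order $1$ or $2$. Thus $|\Ker\tau|\in\{8,4\}$: the value $8$ gives the trivial $\tau$ (the direct product $\Z_p\times Q_8$), while for $|\Ker\tau|=4$ the kernel is one of the three cyclic subgroups of order $4$ of $F$, namely $\langle(2,1)\rangle$, $\langle(1,3)\rangle$, $\langle(7,3)\rangle$. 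As $\Z_p^*$ has a unique involution, such an order-$4$ kernel determines $\tau$ uniquely (the nontrivial coset is forced onto $-1$), so only the kernels must be tracked.

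Next I would count the $\Aut(E)$-orbits on these three kernels, exactly as in the discussion preceding the statement. Since $\Aut(\Z_8)=\Z_8^*$ is abelian, $\Phi_\nu(a,l)=(\nu a,l)$ fixes the $\Z_8^*$-component of every element of $\Hol(\Z_8)$; the subgroup $\langle(2,1)\rangle$ lies inside the translation copy of $\Z_8$ (all second components equal $1$), whereas $\langle(1,3)\rangle$ and $\langle(7,3)\rangle$ each contain elements with second component $3$, so $\langle(2,1)\rangle$ is conjugate to neither; on the other hand $\Phi_7$ sends $(1,3)$ to $(7,3)$, so $\Phi_7\bigl(\langle(1,3)\rangle\bigr)=\langle(7,3)\rangle$. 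Hence the three order-$4$ kernels form exactly two $\Aut(E)$-orbits.

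Putting this together, the pairs $(F,\tau)$ fall into $1+2=3$ conjugacy classes: the direct product and two proper semidirect products with cyclic kernel of order $4$. By Proposition~\ref{determsemid} these correspond to three conjugacy classes of regular subgroups of $\Hol(\Z_p\times\Z_8)$, hence to three non-isomorphic left braces, each with additive group $\Z_p\times\Z_8$ and multiplicative group $\Z_p\rtimes Q_8$ (the trivial $\tau$ giving $\Z_p\times Q_8$, and the two order-$4$ kernels giving the same abstract group since $\Aut(Q_8)$ permutes the three cyclic subgroups of order $4$). The argument is essentially bookkeeping; the only points needing care are the reduction showing that an order-$4$ kernel pins down $\tau$ (where oddness of $p$ is used) and the verification that the three kernels have precisely the stated conjugation pattern, both immediate from the explicit form of $F$ and of $\Phi_\nu$ on $\Hol(\Z_8)$.
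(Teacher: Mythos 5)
Your proposal is correct and follows essentially the same route as the paper: fix the unique (normal) regular copy $F=\langle (2,1),(1,3)\rangle\cong Q_8$ in $\Hol(\Z_8)$, reduce via Proposition~\ref{determsemid} to $\Aut(\Z_8)$-orbits of kernels of $\tau\colon F\to\Z_p^*$, rule out all proper kernels except the three cyclic order-$4$ subgroups, and observe that $\langle(2,1)\rangle$ is isolated by its second components while $\Phi_7$ conjugates $\langle(1,3)\rangle$ to $\langle(7,3)\rangle$, giving $1+2=3$ braces. Your justification that only kernels of order $8$ or $4$ occur (via $[Q_8,Q_8]$ and the unique involution of $\Z_p^*$) is a slightly more explicit rendering of the paper's remark, but the argument is the same.
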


Now we look for the unique conjugacy class in $\Hol(\Z_4\times \Z_2)$ of regular subgroups isomorphic to $Q_8$.
The $``-1"$ element has to be the invariant element $((2,0),id)$. Therefore, the elements of order $4$ should have either $id$ or an element of order two in its second component.
We take the order $4$ element  $i=((1,0),id)$ so that $i^2=((2,0),id)$. Then $j=((0,1),rs)$ satisfies
$$
j^2=((2,0), id),\ k=ij=((1,1), rs),\ k^2=((2,0), id).
$$
Therefore the regular subgroup is $F=\langle ((1,0),id),\ ((0,1),rs)\rangle$. The possible cyclic kernels are
$$
 \langle ((1,0),id)\rangle,\
 \langle ((0,1),rs)\rangle\
\mbox{ and} \
\langle ((1,1),rs) \rangle.
$$
As we can see in the table of conjugacy classes in subsection \ref{z4z2} these cyclic subgroups are not conjugate.

\begin{proposition}
 Let $p=5$ or $p\ge 11$ be a prime. There are $4$ left braces with additive group
     $\Z_p\times \Z_4\times \Z_2$ and multiplicative group $\Z_p\rtimes Q_8$.
     One is a direct product and the other three have cyclic kernel of order 4.
\end{proposition}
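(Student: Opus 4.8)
The plan is to apply Proposition~\ref{determsemid} with $E=\Z_4\times\Z_2$ and $F\simeq Q_8$: the braces in the statement correspond bijectively to pairs $(F,\tau)$, where $F$ runs over representatives of the conjugacy classes of regular quaternion subgroups of $\Hol(E)$ and $\tau$ runs over $\Aut(E)$-conjugacy classes of homomorphisms $\tau\colon F\to\Aut(\Z_p)=\Z_p^{*}$. By the Magma computation recorded in the table of Section~\ref{direct}, $\Hol(\Z_4\times\Z_2)$ has a single conjugacy class of regular $Q_8$-subgroups, so only one $F$ must be analysed.

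First I would pin down $F$ explicitly, in the notation of Subsection~\ref{z4z2}. Take $i=((1,0),id)$, of order $4$ with $i^{2}=((2,0),id)$, the $\Aut(E)$-invariant involution. A short computation gives $j=((0,1),rs)$ with $j^{2}=((2,0),id)$, $ij=((1,1),rs)$ and $j^{-1}ij=i^{-1}$, so that $F=\langle i,j\rangle\simeq Q_8$; checking that every non-identity element of $F$ acts on $E$ without fixed points shows $F$ is regular. Its three cyclic subgroups of order $4$ are
$$\langle((1,0),id)\rangle,\qquad \langle((0,1),rs)\rangle,\qquad \langle((1,1),rs)\rangle .$$

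Next I would list the homomorphisms $\tau\colon F\to\Z_p^{*}$. Since $\Z_p^{*}$ is cyclic and $Q_8^{\mathrm{ab}}\simeq\Z_2\times\Z_2$, every $\tau$ factors through $Q_8^{\mathrm{ab}}$ and hence has image of order $1$ or $2$. Therefore $\ker\tau$ is either $F$ itself — the trivial $\tau$, producing a single direct-product brace — or a subgroup of index $2$; as noted at the start of this subsection, the only such subgroups of $Q_8$ are the three cyclic ones above, each of which determines $\tau$ uniquely. In particular there is no $\tau$ with kernel of order $2$ or with Klein kernel, and the resulting count does not depend on $p\bmod 4$.

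Finally I would check that the three cyclic kernels give pairwise non-conjugate, non-trivial pairs. By Proposition~\ref{determsemid} it is enough that no $\Phi_\nu$ with $\nu\in\Aut(E)$ carries one of the three subgroups onto another, and for this it suffices that they are not conjugate in $\Hol(E)$ at all: a conjugation mapping one such cyclic group to another would send a generator to a generator of the image, hence into the same $\Hol(E)$-conjugacy class, whereas the order-$4$ elements $((1,0),id)$, $((0,1),rs)$, $((1,1),rs)$ lie in three \emph{distinct} conjugacy classes in the order-$4$ table of Subsection~\ref{z4z2} (and the inverse of each stays in its own class). Hence the three kernels are pairwise non-conjugate, and together with the direct-product brace we obtain exactly $4$ left braces, three of them with cyclic kernel of order $4$. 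There is no serious obstacle here; the only care needed is to correctly import the uniqueness of the regular $Q_8$-class from the Section~\ref{direct} computation and to read off from the Subsection~\ref{z4z2} table that the three chosen generators sit in distinct $\Hol(E)$-classes.
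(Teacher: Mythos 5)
Your proposal is correct and takes essentially the same approach as the paper: it works with the single conjugacy class of regular $Q_8$-subgroups $F=\langle((1,0),id),\,((0,1),rs)\rangle$ of $\Hol(\Z_4\times\Z_2)$, rules out kernels of order $2$ and Klein kernels (you via the abelianization $Q_8^{\mathrm{ab}}\simeq\Z_2\times\Z_2$, the paper via the structure of the normal subgroups of $Q_8$ --- the same fact), and separates the three cyclic kernels by noting that their generators lie in distinct conjugacy classes in the order-$4$ table of Subsection \ref{z4z2}, exactly as the paper does.
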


Finally, inside $\Hol(\Z_2\times \Z_2\times \Z_2)$ we have also a unique conjugacy class of subgroups isomorphic to $Q_8$. 
We should take elements $(v,A)$ with $A^2=Id$ and $v$ not an eigenvector. Then $(v,A)^2=(u,Id)$ with $u$ the unique non-zero vector in $Im(A+Id)$. Keeping the previous notations, we take the order $4$ element  $i=(e_3,S)$ so that $i^2=(e_2,Id)$. Then $j=(e_1,M)$ satisfies
$$
j^2=(e_2, Id),\ k=ij=(e_1+e_3, SM),\ k^2=(e_2, Id).
$$
Therefore the regular subgroup is $F=\langle (e_3,S),\ (e_1,M)\rangle$. The possible cyclic kernels are
$$
 \langle (e_3,S)\rangle,\
 \langle (e_1,M)\rangle\
\mbox{ and} \
\langle (e_1+e_3,MS) \rangle.
$$
Let us take the matrix of order $3$
$$
D:=\begin{bmatrix}
1&0&1\\
0&1&0\\
1&0&0\\
    \end{bmatrix}.
$$
Then $DSD^{-1}=M$ and $\Phi_D(e_3,S)=(e_1,M)$. Also $DMD^{-1}=MS$ and $\Phi_D(e_1,M)=(e_1+e_3,MS)$. This proves that $F$ is invariant under $\Phi_D$ and that the three cyclic subgroups are conjugate.

\begin{proposition}
 Let $p=5$ or $p\ge 11$ be a prime. There are $2$ left braces with additive group
     $\Z_p\times \Z_2\times \Z_2\times \Z_2$ and multiplicative group $\Z_p\rtimes Q_8$.
     One is a direct product and the other one has cyclic kernel of order 4.
\end{proposition}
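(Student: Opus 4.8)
The plan is to invoke Proposition~\ref{determsemid} with $E=\Z_2\times\Z_2\times\Z_2$: the left braces with additive group $\Z_p\times E$ and multiplicative group $\Z_p\rtimes Q_8$ are in bijection with pairs $(F,\tau)$, where $F$ runs over representatives of the conjugacy classes of regular subgroups of $\Hol(E)\simeq\F_2^3\rtimes\GL(3,2)$ that are isomorphic to $Q_8$, and $\tau$ runs over the $\Aut(E)$-conjugacy classes of homomorphisms $\tau\colon F\to\Aut(\Z_p)\simeq\Z_p^*$. By the enumeration underlying Proposition~\ref{direct}, there is a unique conjugacy class of regular $Q_8$-subgroups in $\Hol(E)$, so everything reduces to fixing one representative $F$ and counting classes of $\tau$.

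First I would exhibit $F$ explicitly. Recalling the order-$2$ matrices $S$ and $M$ of $\GL(3,2)$ used earlier and the canonical basis $e_1,e_2,e_3$ of $\F_2^3$, set $i=(e_3,S)$, $j=(e_1,M)$, $k=ij=(e_1+e_3,SM)$. A short computation over $\F_2$ gives $i^2=j^2=k^2=(e_2,\Id)$ together with the quaternion relations, so $F=\langle(e_3,S),(e_1,M)\rangle$ is isomorphic to $Q_8$, and it acts on $\F_2^3$ with trivial stabilizers by the earlier analysis of such elements, hence is regular. This is the step with the most bookkeeping, but it is a finite check.

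Next I would describe the homomorphisms $\tau\colon Q_8\to\Z_p^*$. Since $\Z_p^*$ is cyclic and the derived subgroup of $Q_8$ is its unique central subgroup of order $2$, the image of $\tau$ lies in the $2$-torsion of $\Z_p^*$ and is therefore trivial or of order $2$. The trivial $\tau$ yields the direct product. If $\tau$ is nontrivial, its kernel is one of the three cyclic subgroups of order $4$ of $F$, namely $\langle(e_3,S)\rangle$, $\langle(e_1,M)\rangle$, $\langle(e_1+e_3,MS)\rangle$, and conversely each of these kernels determines $\tau$ uniquely (the induced map $Q_8/\ker\tau\xrightarrow{\sim}\Z_2\to\{\pm1\}\subset\Z_p^*$). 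So there are at most two pairs $(F,\tau)$ up to equivalence.

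Finally, by Proposition~\ref{determsemid}, two such $\tau$'s give isomorphic braces precisely when their kernels are carried into one another by an automorphism of $E$ preserving $F$. I would exhibit the order-$3$ matrix
$$
D=\begin{bmatrix}1&0&1\\0&1&0\\1&0&0\end{bmatrix}
$$
and check $DSD^{-1}=M$ and $DMD^{-1}=MS$; this shows that $\Phi_D$ normalizes $F$ and cyclically permutes its three order-$4$ cyclic subgroups, so the three nontrivial $\tau$'s lie in a single $\Aut(E)$-class. Hence there are exactly two equivalence classes of pairs $(F,\tau)$ — the trivial one and one with cyclic kernel of order $4$ — and, these being non-equivalent, the corresponding regular subgroups of $\Hol(N)$ are non-conjugate, giving the two non-isomorphic braces claimed. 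The one real obstacle is producing the automorphism $D$ that simultaneously normalizes $F$ and rotates its three cyclic subgroups; once it is written down, the rest is routine verification.
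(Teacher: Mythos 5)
Your proposal is correct and follows essentially the same route as the paper: it fixes the unique conjugacy class of regular $Q_8$-subgroups of $\Hol(\Z_2\times\Z_2\times\Z_2)$ via the same generators $i=(e_3,S)$, $j=(e_1,M)$, reduces nontrivial $\tau$ to the three cyclic kernels of order $4$, and uses the same order-$3$ matrix $D$ with $DSD^{-1}=M$, $DMD^{-1}=MS$ to show $\Phi_D$ normalizes $F$ and permutes the three kernels, yielding exactly the two braces claimed.
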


\section{Total numbers}

For an odd prime $p\ne 3,7$ we compile in the following tables the total number of left braces of size $8p$.
Recall that for $p=3,7$ this number is given in \cite{V} and is 96 and 91, respectively.

The
additive group is $\Z_p\times E$ and the multiplicative group is a semidirect product $\Z_p\rtimes F$.
In the first column we have the possible $E$'s and in the first row the possible $F$'s.

\begin{itemize}
    \item If $p\ge 11$ and $p\not\equiv 1\bmod 4$
\begin{center}
\begin{tabular}{r|c|c|c|c|c|c}
&$\Z_8$   &   $\Z_4\times \Z_2$   &  $\Z_2\times \Z_2\times \Z_2$   & $D_{2\cdot 4}$ &$Q_8$ & \\
\hline
   $\Z_8$   &4 & 4& 0& 3&3&14\\
  $\Z_4\times \Z_2$ &0 &9 &8  &20 &4&41\\
  $\Z_2\times \Z_2\times \Z_2$  &0 &20 & 5& 8 &2&35\\
  \hline
  &4& 33 & 13 & 31 & 9 & $\mathbf{90}$\\
\end{tabular}
\end{center}

\item If $p\equiv 5\bmod 8$

\begin{center}
\begin{tabular}{r|c|c|c|c|c|c}
&$\Z_8$   &   $\Z_4\times \Z_2$   &  $\Z_2\times \Z_2\times \Z_2$   & $D_{2\cdot 4}$ &$Q_8$ & \\
\hline
   $\Z_8$   &6 & 6& 0& 3&3&18\\
  $\Z_4\times \Z_2$ &0 &13 &8  &20 &4&45\\
  $\Z_2\times \Z_2\times \Z_2$  &0 &28 & 5& 8 &2&43\\
  \hline
  &6& 47 & 13 & 31 & 9 & $\mathbf{106}$\\
\end{tabular}
\end{center}

\item If $p\equiv 1\bmod 8$

\begin{center}
\begin{tabular}{r|c|c|c|c|c|c}
&$\Z_8$   &   $\Z_4\times \Z_2$   &  $\Z_2\times \Z_2\times \Z_2$   & $D_{2\cdot 4}$ &$Q_8$ & \\
\hline
   $\Z_8$   &8 & 6& 0& 3&3&20\\
  $\Z_4\times \Z_2$ &0 &13 &8  &20 &4&45\\
  $\Z_2\times \Z_2\times \Z_2$  &0 &28 & 5& 8 &2&43\\
  \hline
  &8& 47 & 13 & 31 & 9 & $\mathbf{108}$\\
\end{tabular}
\end{center}

\end{itemize}

\end{document}